\documentclass[11pt]{amsart}

\usepackage{amsmath,amssymb,amsthm}
\usepackage[margin=1in]{geometry}
\usepackage{hyperref}

\newtheorem{theorem}{Theorem}[section]
\newtheorem{lemma}[theorem]{Lemma}
\newtheorem{corollary}[theorem]{Corollary}
\newtheorem{proposition}[theorem]{Proposition}

\newtheorem{definition}[theorem]{Definition}
\newtheorem{remark}[theorem]{Remark}

\DeclareMathOperator{\supp}{supp}
\DeclareMathOperator{\union}{\cup}

\title[The  Pego Theorem for the Hilbert--Schmidt Class]{The  Pego Theorem for the Hilbert--Schmidt Class}
\author{Yaogan Mensah}
\address{Department of Mathematics, University of Lom\'e, Lom\'e, Togo}
\email{mensahyaogan2@gmail.com, ymensah@univ-lome.tg}
\date{}

\begin{document}

\begin{abstract}
This paper establishes an  operator-theoretic version of Pego's compactness theorem within the framework of quantum harmonic analysis on general locally compact abelian phase spaces. We show that a bounded set of Hilbert-Schmidt operators is precompact if and only if it is uniformly equicontinuous under phase-space shifts and its Fourier-Weyl transform is uniformly equicontinuous on the dual phase space. We provide applications to quantum physics.  
\end{abstract}
\subjclass[2020]{47B10, 43A25, 46B50, 47B90, 43A65, 81S30}
\keywords{quantum harmonic analysis, precompactness, uniform equicontinuity, uniform decay, Pego theorem}

\maketitle

\section{Introduction}
\label{sec:intro}
In 1985, Pego proved a compactness criterion for $L^2(\mathbb R^n)$: a bounded set
$F\subset L^2(\mathbb R^n)$ is precompact if and only if both $F$ and its Fourier
transform $\widehat{F}$ satisfy a joint decay and equicontinuity condition \cite{Pego1985}. This is a
refinement of the Fr\'echet-Riesz-Kolmogorov theorem (extended by Weil to locally compact groups \cite{Weil1940}). 

G\'orka extended Pego's theorem to arbitrary locally compact abelian (LCA) groups $G$, replacing the ordinary
Fourier transform by the group Fourier transform \cite{Gorka2014, Gorka2016}. Kumar later proved a version for
 compact groups, where the dual object $\widehat{G}$ is the
discrete set of irreducible unitary representations \cite{Kumar2024}. Lakmon and Mensah extended the results of Kumar to Hilbert space-valued functions \cite{Lakmon2026}. Other works on Pego type theorem include \cite{Dorfler2002, Horvath2022, Krukowski2020}.

On a completely different axis, Werner introduced quantum harmonic analysis (QHA) on the phase space $\Xi=\mathbb R^{2n}$: convolutions of functions and operators, a
Fourier-Weyl  transform taking trace-class operators to functions on
phase space, and versions of the Riemann-Lebesgue lemma, the Hausdorff-Young
inequality, Plancherel's theorem, and Wiener's approximation theorem for operators
\cite{Werner1984}. Fulsche and Galke extended this entire machinery to phase
spaces $\Xi$ that are arbitrary LCA groups equipped with a Heisenberg multiplier,
recovering Werner's results 
\cite{FulscheGalke2025}.

The main objective of this paper is to establish a non-commutative, operator-theoretic version of Pego's theorem within the expansive framework of quantum harmonic analysis. Specifically, we establish a criterion for precompactness in the Hilbert-Schmidt class $\mathcal{T}^2(\mathcal{H})$ using the interplay between structural properties of operator families and their  Fourier-Weyl transforms. Our central result  demonstrates that a bounded subset $F \subset \mathcal{T}^2(\mathcal{H})$ is precompact if and only if $F$ is uniformly $\mathcal{T}^2$-equicontinuous under unitary phase-space shifts, and its Fourier-Weyl transform $\mathcal{F}_U(F)$ is uniformly $L^2(\widehat{\Xi})$-equicontinuous. 

To demonstrate the  utility and physical relevance of our results, we apply our criterion to two distinct situations. First, we establish the norm-precompactness of multi-mode thermal-state families under finite physical energy constraints in quantum information. Second, we verify the precompactness, hence tightness, of tomographic quantum state estimators in quantum statistics.

The paper is organized as follows. Section \ref{sec:classical} collects the classical background on precompactness in $L^p$-spaces.  Section \ref{sec:prelim} gathers the mathematical preliminaries of quantum harmonic analysis on a general Heisenberg phase space.  Section \ref{sec:quantumPego} introduces uniform equicontinuity and uniform decay for operator families and culminates in the quantum type of  Pego theorem. Finally, Section \ref{sec:applications} applies the results to two settings: the precompactness of energy bounded multi-mode thermal state families in quantum information  and the tightness of tomographic quantum state estimators in quantum statistics.

\section{Precompactness in $L^p$-spaces}
\label{sec:classical}
To keep the paper reasonably self-contained, we gather here the classical results on
precompactness in $L^p$-spaces:  the Fr\'echet-Kolmogorov-Riesz theorem on
$\mathbb{R}^n$, its extension by Weil to an arbitrary locally compact group, and the
unmixed characterizations due to Pego and G\'orka. We begin with the following definition.

\begin{definition}
\label{def:relatively-compact}
Let $(X, d)$ be a metric space. A subset $A \subset X$ is said to be \emph{relatively compact} (or precompact) in $X$ if its closure $\overline{A}$ is a compact subset of $X$.
\end{definition}

\begin{theorem}[Fr\'echet-Kolmogorov-Riesz]
\label{thm:kolmogorov-riesz}
Let $1 \le p < \infty$ and let $F \subset L^p(\mathbb{R}^n)$. Then $F$ is relatively compact in $L^p(\mathbb{R}^n)$ if and only if the following three conditions hold:
\begin{enumerate}
    \item[(i)]  There exists $M > 0$ such that
       $$\|f\|_{L^p} \le M \quad \text{for all } f \in F.
    $$
    \item[(ii)]  For every $\varepsilon > 0$, there exists $R > 0$ such that
    $$
        \int_{|x| > R} |f(x)|^p \, dx < \varepsilon^p \quad \text{for all } f \in F.
    $$
    \item[(iii)] For every $\varepsilon > 0$, there exists $\delta > 0$ such that
    $$
        \|\tau_y f - f\|_{L^p} < \varepsilon \qquad \text{for all } y \in \mathbb{R}^n \text{ with } |y| < \delta, \text{ and all } f \in F,
    $$
    where $\tau_y f(x) = f(x - y)$.
\end{enumerate}
\end{theorem}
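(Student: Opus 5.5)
We prove the two implications separately, working throughout in the complete metric space $L^p(\mathbb{R}^n)$, where relative compactness is equivalent to total boundedness.

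\emph{Necessity.} Suppose $F$ is relatively compact, and fix $\varepsilon>0$. Choose a finite $\varepsilon/3$-net $f_1,\dots,f_N\in F$. Boundedness (i) follows immediately, with $M=\max_j\|f_j\|_{L^p}+\varepsilon/3$.

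For (ii), pick for each $j$ a radius $R_j$ with $\|f_j\mathbf 1_{\{|x|>R_j\}}\|_{L^p}<\varepsilon/3$, which is possible by dominated convergence. Set $R=\max_j R_j$. Given $f\in F$, take $j$ with $\|f-f_j\|_{L^p}<\varepsilon/3$. Then
\[
\|f\mathbf 1_{\{|x|>R\}}\|_{L^p}\le \|f-f_j\|_{L^p}+\|f_j\mathbf 1_{\{|x|>R\}}\|_{L^p}<\varepsilon .
\]

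For (iii), use the continuity of translation $y\mapsto\tau_y g$ in $L^p$ for each fixed $g$. This is proved by density of $C_c(\mathbb{R}^n)$ and uniform continuity, and requires $p<\infty$. It gives a $\delta$ with $\|\tau_yf_j-f_j\|_{L^p}<\varepsilon/3$ for all $j$ and all $|y|<\delta$. Since $\tau_y$ is an isometry,
\[
\|\tau_yf-f\|_{L^p}\le 2\|f-f_j\|_{L^p}+\|\tau_yf_j-f_j\|_{L^p}<\varepsilon .
\]

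\emph{Sufficiency.} This is the substantive direction. Fix $\varepsilon>0$ and let $\phi_\delta=|B_\delta|^{-1}\mathbf 1_{B_\delta}$. Minkowski's integral inequality and (iii) give, for all $f\in F$,
\[
\|f*\phi_\delta-f\|_{L^p}\le \sup_{|y|<\delta}\|\tau_yf-f\|_{L^p}<\varepsilon .
\]
By (ii), also $\|f-f\mathbf 1_{B_R}\|_{L^p}<\varepsilon$. Consider the family
\[
\mathcal G=\{(f*\phi_\delta)|_{\overline{B_R}}: f\in F\}\subset C(\overline{B_R}).
\]
By Hölder's inequality, $|f*\phi_\delta(x)|\le\|\phi_\delta\|_{L^{p'}}\|f\|_{L^p}\le C_\delta M$, so $\mathcal G$ is uniformly bounded. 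It is also equicontinuous, since
\[
|f*\phi_\delta(x+h)-f*\phi_\delta(x)|\le \|\tau_{-h}\phi_\delta-\phi_\delta\|_{L^{p'}}\,M
\]
when $p>1$. When $p=1$, use instead
\[
|f*\phi_\delta(x+h)-f*\phi_\delta(x)|\le |B_\delta|^{-1}\,\|\tau_{-h}f-f\|_{L^1}\le |B_\delta|^{-1}\varepsilon'
\]
for $|h|$ small, by (iii). In both cases the bound is uniform in $f\in F$.

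Arzelà–Ascoli then makes $\mathcal G$ totally bounded in the sup norm on $\overline{B_R}$. Since $B_R$ has finite measure, $\|g\|_{L^p(B_R)}\le |B_R|^{1/p}\|g\|_\infty$, so $\mathcal G$ is totally bounded in $L^p(B_R)$ as well. Choose a finite $\varepsilon$-net $\{g_k\}$ of $\mathcal G$ in $L^p(B_R)$, extended by zero. For $f\in F$, with $g_k$ close to $(f*\phi_\delta)|_{B_R}$, split
\[
\|f-g_k\|_{L^p}\le\|f\mathbf 1_{B_R^c}\|_{L^p}+\|(f-f*\phi_\delta)\mathbf 1_{B_R}\|_{L^p}+\|(f*\phi_\delta)\mathbf 1_{B_R}-g_k\|_{L^p}<3\varepsilon .
\]
Hence $F$ is totally bounded, and therefore relatively compact.

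The main obstacle is the equicontinuity estimate for the mollified family, which needs separate treatment at $p=1$. In that case we use (iii) directly, as above, rather than the continuity of translation in $L^{p'}=L^\infty$, which fails.
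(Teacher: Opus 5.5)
The paper gives no proof of this theorem. It is quoted only as classical background, with a pointer to Hanche-Olsen and Holden for the history, so your proposal has to be judged on its own. It is a correct and complete proof:
\begin{itemize}
\item necessity via a finite net drawn from $F$, monotone/dominated convergence for the tails, and continuity of translation for each of the finitely many net elements (this is where $p<\infty$ enters);
\item sufficiency via mollification by $\phi_\delta=|B_\delta|^{-1}\mathbf 1_{B_\delta}$, Arzel\`a--Ascoli on $\overline{B_R}$, and the three-term splitting.
\end{itemize}

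Two minor points. First, in $\|f*\phi_\delta-f\|_{L^p}\le\sup_{|y|<\delta}\|\tau_yf-f\|_{L^p}<\varepsilon$, the supremum of quantities each below $\varepsilon$ is only $\le\varepsilon$. Apply (iii) with $\varepsilon/2$, or observe that the averaged integral is already strictly below $\varepsilon$.

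Second, the case split at $p=1$ is unnecessary. H\"older's inequality gives
\[
|f*\phi_\delta(x+h)-f*\phi_\delta(x)|\le\|\phi_\delta\|_{L^{p'}}\,\|\tau_{-h}f-f\|_{L^p}
\]
for every $1\le p<\infty$, and (iii) makes the right-hand side uniformly small in $f\in F$. So the estimate you reserve for $p=1$ handles all $p$ at once, and you never need continuity of translation of $\phi_\delta$ in $L^{p'}$.

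For comparison, the argument in the Hanche-Olsen--Holden survey the paper cites replaces the mollifier by an averaging operator. Partition a large cube $Q\supset B_R$ into cubes $Q_i$ of diameter below the $\delta$ from (iii), and set
\[
Pf=\sum_i\Big(|Q_i|^{-1}\int_{Q_i}f\Big)\mathbf 1_{Q_i}.
\]
Jensen's inequality together with (iii) makes $Pf$ uniformly close to $f$ on $Q$, within a dimensional constant times $\varepsilon$, and (ii) controls the part outside $Q$. Since $P(F)$ is a bounded subset of a finite-dimensional space, it is totally bounded; a short metric-space lemma then transfers total boundedness back to $F$.

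That route avoids Arzel\`a--Ascoli and any equicontinuity estimate for the approximants. Your route costs that extra step, but it is the standard textbook argument and produces uniformly continuous approximants.
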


This result is due, in various forms, to Fr\'echet, Kolmogorov, and Riesz between
1928 and 1933 (see \cite{HancheOlsenHolden2010} for the history). It was extended by
Weil in 1940 \cite{Weil1940} from $L^p(\mathbb{R}^n)$ to $L^p(G)$ for an arbitrary
locally compact group $G$.

\begin{theorem}[Weil \cite{Weil1940}]
\label{thm:weil-compactness}
Let $G$ be a locally compact Hausdorff group equipped with a left Haar measure, and let $1 \le p < \infty$. A subset $F \subset L^p(G)$ is relatively compact if and only if the following three conditions hold:
\begin{enumerate}
    \item[(i)]  There exists $M > 0$ such that
    $$
 \|f\|_{L^p(G)} \le M \quad \text{for all } f \in F.
    $$
    \item[(ii)]  For every $\varepsilon > 0$, there exists a compact set $K \subset G$ such that
    $$
        \|f - f {1}_K\|_{L^p(G)} < \varepsilon \quad \text{for all } f \in F.
    $$
    \item[(iii)]  For every $\varepsilon > 0$, there exists an open neighbourhood $V$ of the identity $e \in G$ such that
    $$
        \|L_x f - f\|_{L^p(G)} < \varepsilon \qquad \text{for all } x \in V \text{ and all } f \in F,
    $$
    where $L_x f(y) = f(x^{-1} y)$.
\end{enumerate}
\end{theorem}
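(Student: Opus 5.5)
We prove the two directions separately, working in the complete metric space $L^p(G)$, where relative compactness is the same as total boundedness.

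\emph{Necessity.} Suppose $\overline{F}$ is compact. Condition (i) holds because a totally bounded set is bounded. For (ii) and (iii), fix $\varepsilon>0$ and choose a finite $\varepsilon/3$-net $f_1,\dots,f_m$ for $F$.

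For (ii), each $f_j$ lies in $L^p(G)$ with $p<\infty$, so $C_c(G)$ is dense. Monotone convergence over the directed family of compact sets therefore gives a compact $K_j$ with $\|f_j-f_j1_{K_j}\|_p<\varepsilon/3$. Put $K=\bigcup_j K_j$. For $f\in F$, pick $j$ with $\|f-f_j\|_p<\varepsilon/3$. Multiplication by $1-1_K$ is a contraction, so
$$\|f-f1_K\|_p\le \|(f-f_j)(1-1_K)\|_p+\|f_j(1-1_K)\|_p<\varepsilon.$$

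For (iii), left translation is strongly continuous on $L^p(G)$. This follows by approximating with $C_c(G)$, using uniform continuity of compactly supported continuous functions together with left invariance of Haar measure. Hence there are neighbourhoods $V_j$ of $e$ with $\|L_xf_j-f_j\|_p<\varepsilon/3$ for $x\in V_j$. Let $V=\bigcap_j V_j$. Since $L_x$ is an isometry,
$$\|L_xf-f\|_p\le \|L_x(f-f_j)\|_p+\|L_xf_j-f_j\|_p+\|f_j-f\|_p<\varepsilon.$$

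\emph{Sufficiency.} This is the main step. We smooth $F$ by convolution and apply Arzel\`a--Ascoli.

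Fix $\varepsilon>0$. Take $K$ from (ii) and $V$ from (iii), with $V$ chosen relatively compact. Set $\varphi=1_V/|V|$ and
$$\varphi*f(y)=\int_G\varphi(x)f(x^{-1}y)\,dx=\int_G\varphi(x)L_xf(y)\,dx .$$
Minkowski's integral inequality gives
$$\|\varphi*f-f\|_p\le \int_G\varphi(x)\|L_xf-f\|_p\,dx<\varepsilon$$
uniformly in $f\in F$. Replacing $f$ by $f1_K$ costs at most $\varepsilon$ more, since convolution with $\varphi$ is a contraction on $L^p(G)$ because $\|\varphi\|_1=1$. It therefore suffices to show that
$$\mathcal{G}=\{\varphi*(f1_K):f\in F\}$$
is totally bounded in $L^p(G)$.

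Each function in $\mathcal{G}$ is supported in the compact set $\overline{V}K$. By H\"older's inequality,
$$|\varphi*(f1_K)(y)|\le \|\varphi\|_{p'}\,M,$$
so $\mathcal{G}$ is uniformly bounded. For equicontinuity, write
$$\varphi*(f1_K)(y)=\int_G \varphi(y z^{-1})\,\Delta(z^{-1})\,(f1_K)(z)\,dz .$$
Then
$$|\varphi*(f1_K)(y)-\varphi*(f1_K)(y')|\le M\,\bigl\|\bigl(\varphi(y\,\cdot^{-1})-\varphi(y'\,\cdot^{-1})\bigr)\Delta(\cdot^{-1})1_K\bigr\|_{p'} .$$
The obstacle is that $\varphi=1_V/|V|$ is not continuous. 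I would fix this first by replacing $1_V$ with a continuous $\psi\in C_c(G)$, $0\le\psi$, $\int\psi=1$, $\supp\psi\subset V$. The Minkowski estimate above still holds, since it only uses $\varphi\ge 0$, $\int\varphi=1$ and $\supp\varphi\subset V$. With $\psi$ continuous and compactly supported, $\psi$ is uniformly continuous and $\Delta$ is bounded on the compact set $K^{-1}$, so the right-hand side tends to $0$ as $y'\to y$, uniformly in $f$.

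Arzel\`a--Ascoli on $C(\overline{V}K)$ then makes $\mathcal{G}$ totally bounded in the sup norm. Because all supports lie in a fixed compact set of finite Haar measure, $\mathcal{G}$ is also totally bounded in $L^p(G)$. Thus every $f\in F$ lies within $2\varepsilon$ of the totally bounded set $\mathcal{G}$, and since $\varepsilon$ is arbitrary, $F$ is totally bounded.
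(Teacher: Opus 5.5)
The paper does not prove this theorem. It quotes it from Weil and uses it only as a black box, for the abelian group $\widehat{\Xi}$, in the step (iii)$\Rightarrow$(i) of Theorem~\ref{thm:main}. So there is no argument in the paper to compare against.

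Your proposal gives the standard self-contained proof, and it is correct for every $1\le p<\infty$ without assuming unimodularity. Necessity comes from a finite $\varepsilon/3$-net, density of $C_c(G)$, and strong continuity of $x\mapsto L_xf$. Sufficiency comes from four steps: mollify with an approximate identity supported in the neighbourhood from (iii), cut off with the compact set from (ii), apply Arzel\`a--Ascoli on the compact Hausdorff space $\overline{V}K$, and turn sup-norm nets into $L^p$-nets using $|\overline{V}K|<\infty$. Replacing $1_V/|V|$ by a continuous $\psi$ is exactly what $p=1$ requires, since there the H\"older estimate involves $p'=\infty$. For $p>1$ the indicator would also work, by continuity of translation in $L^{p'}$. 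You should simply start with $\psi$.

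Three details need tidying; none changes the structure of the argument.

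\emph{The pointwise bound misses a modular factor.} Since $\int_G h(x^{-1})\,dx=\int_G h(z)\Delta(z^{-1})\,dz$, one gets $\int_G|g(x^{-1}y)|^p\,dx=\int_G|g(z)|^p\Delta(z^{-1})\,dz$. H\"older then gives $|\psi*(f1_K)(y)|\le\|\psi\|_{p'}\,M\,\sup_{z\in K}\Delta(z^{-1})^{1/p}$. This matches your bound only for unimodular $G$, but a uniform bound is all Arzel\`a--Ascoli needs.

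\emph{Necessity of (ii).} Monotone convergence along a net of compact sets needs justification, and you do not need it. Take $g_j\in C_c(G)$ with $\|f_j-g_j\|_p<\varepsilon/3$ and put $K_j=\supp g_j$. Then $\|f_j-f_j1_{K_j}\|_p=\|(f_j-g_j)1_{G\setminus K_j}\|_p<\varepsilon/3$.

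\emph{Non-$\sigma$-compact $G$.} Your uses of Minkowski's integral inequality and Fubini need the usual remark that a function in $L^p$, $p<\infty$, vanishes off a $\sigma$-compact set. Alternatively, read $\int_G\psi(x)L_xf\,dx$ as a Bochner integral of the continuous map $x\mapsto L_xf$ over the compact set $\supp\psi$. That reading also gives $\|\psi*g\|_p\le\|\psi\|_1\|g\|_p$ directly from the isometry of $L_x$.
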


Pego observed that for $p=2$ and $G=\mathbb R^n$, the Plancherel
theorem yields  one of the two conditions 
imposed  on $F$ and its Fourier transform $\widehat{F}=\{\widehat{f}:
f\in F\}$.

\begin{theorem}[Pego \cite{Pego1985}]
\label{thm:pego-compactness}
Let $F \subset L^2(\mathbb{R}^n)$ be bounded. The following assertions
are equivalent.
\begin{enumerate}
\item[(i)] $ F$ is relatively compact in $L^2(\mathbb{R}^n)$.
\item[(ii)] \text{Decay of $F$ and of $\widehat{F}$:} for
every $\varepsilon>0$ there are $R,\rho>0$ such that
$$
 \int_{|x| > R} |f(x)|^2 \, dx < \varepsilon^2
        \quad\text{and}\quad
        \int_{|\xi| > \rho} |\widehat{f}(\xi)|^2 \, d\xi < \varepsilon^2
        \quad \text{for all } f \in F.
$$
\item[(iii)] \text{Equicontinuity of $ F$ and of $\widehat{
F}$:} for every $\varepsilon>0$ there is $\delta>0$ such that
$$
\|\tau_yf-f\|_{L^2}<\varepsilon \quad\text{and}\quad
\|\tau_\eta\widehat f-\widehat f\|_{L^2}<\varepsilon
\qquad\text{for all } f\in F,\ |y|,|\eta|<\delta.
$$
\end{enumerate}
\end{theorem}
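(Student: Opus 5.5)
The plan is to reduce everything to Theorem~\ref{thm:kolmogorov-riesz} with $p=2$, using Plancherel to turn each condition on $\widehat F$ into a condition on $F$. Normalize the Fourier transform so that it is unitary on $L^2(\mathbb{R}^n)$. Two identities drive the argument. First, modulation on one side is translation on the other: with $(M_\eta f)(x)=e^{2\pi i x\cdot\eta}f(x)$ we have $\tau_\eta \widehat f=\widehat{M_\eta f}$. Hence
$$
\|\tau_\eta\widehat f-\widehat f\|_{L^2}^2=\int|e^{2\pi i x\cdot\eta}-1|^2|f(x)|^2\,dx .
$$
Second, symmetrically,
$$
\|\tau_y f-f\|_{L^2}^2=\int|e^{-2\pi i \xi\cdot y}-1|^2|\widehat f(\xi)|^2\,d\xi .
$$
These identities show that equicontinuity of $\widehat F$ is decay of $F$ in disguise, and equicontinuity of $F$ is decay of $\widehat F$.

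\textbf{(i)$\Rightarrow$(iii).} By Theorem~\ref{thm:kolmogorov-riesz}, $F$ satisfies conditions (ii) and (iii) there. The first half of (iii) is exactly condition (iii) of Theorem~\ref{thm:kolmogorov-riesz}. For the second half, apply the first identity and split the integral at $|x|=R$:
\begin{itemize}
\item on $|x|>R$, bound $|e^{2\pi i x\cdot\eta}-1|^2\le 4$ and use the decay condition;
\item on $|x|\le R$, bound the factor by $(2\pi R|\eta|)^2M^2$.
\end{itemize}
Choosing $R$ first and then $\delta$ small gives the estimate uniformly over $F$.

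\textbf{(iii)$\Rightarrow$(ii).} Fix $\delta$ from (iii). The key step, and the main obstacle, is extracting tail control from averaged translation estimates, since the factor $|e^{2\pi i x\cdot\eta}-1|^2$ vanishes on hyperplanes. The remedy is to average over $\eta$ in the ball $B_\delta$. We have
$$
\frac{1}{|B_\delta|}\int_{B_\delta}|e^{2\pi i x\cdot\eta}-1|^2\,d\eta
=2-2\,\mathrm{Re}\,\frac{1}{|B_\delta|}\int_{B_\delta}e^{2\pi i x\cdot\eta}\,d\eta .
$$
The normalized Fourier transform of the ball tends to $0$ as $|x|\to\infty$ by Riemann--Lebesgue. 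Hence the average is at least $1$ for $|x|>R$, with $R$ depending only on $\delta$. Averaging the first identity over $\eta\in B_\delta$ then gives
$$
\int_{|x|>R}|f|^2\,dx\le\varepsilon^2 \quad\text{for all } f\in F .
$$
The same computation on the $\widehat f$ side, using the second identity, gives the decay of $\widehat F$.

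\textbf{(ii)$\Rightarrow$(i).} Given the decay of $\widehat F$, the second identity with the same splitting as before (tails bounded by $4$, the region $|\xi|\le\rho$ by $(2\pi\rho|y|)^2M^2$) yields condition (iii) of Theorem~\ref{thm:kolmogorov-riesz}. Boundedness is assumed, and the decay of $F$ is condition (ii) of Theorem~\ref{thm:kolmogorov-riesz}. That theorem therefore gives relative compactness.

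This closes the cycle (i)$\Rightarrow$(iii)$\Rightarrow$(ii)$\Rightarrow$(i). Once the averaging trick is in place, the remaining estimates are routine $\varepsilon/2$ splittings.
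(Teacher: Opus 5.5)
The paper does not prove this theorem; it quotes it from Pego as background. The natural comparison is therefore with the paper's proof of its operator analogue, Theorem~\ref{thm:main}.

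Your argument is correct and complete, and its skeleton matches that proof:
\begin{itemize}
\item Your splitting estimate (tail bounded by $4$, bulk by $(2\pi R|\eta|)^2M^2$) is exactly Lemma~\ref{lem:converse}.
\item Your averaging over $B_\delta$ plays the role of Lemma~\ref{lem:transfer}.
\end{itemize}

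The genuine difference is in how that averaging step is carried out. The paper convolves with an approximate identity $\phi$ supported in the equicontinuity neighbourhood. It bounds $\|A-\phi*A\|$ by Minkowski's inequality for the Bochner integral, then uses Hausdorff--Young together with $|1-\mathcal{F}_\sigma(\phi)|\ge\frac12$ off a compact set. You instead average the \emph{squared} norms and apply Tonelli. This produces the weight $2-2\,\mathrm{Re}\,\widehat{\phi}$ with $\phi=|B_\delta|^{-1}1_{B_\delta}$, which by Jensen dominates the paper's weight $|1-\widehat{\phi}|^2$. Your version is more elementary: it needs no Urysohn function and no vector-valued integral. However, it relies on the exact $L^2$ identity, whereas the paper's convolution argument works verbatim in $\mathcal{T}^p$ for $1\le p\le 2$.

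The remaining differences are cosmetic:
\begin{itemize}
\item You get equicontinuity of $F$ from the necessity half of Theorem~\ref{thm:kolmogorov-riesz}, rather than from total boundedness as in Lemma~\ref{lem:precompacteq}.
\item You get equicontinuity of $\widehat F$ from the decay of $F$, rather than from precompactness of $\widehat F$.
\item You close the cycle with Kolmogorov--Riesz on the space side, rather than Weil's criterion on the Fourier side.
\end{itemize}

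One small point: the statement asks for strict inequality $\int_{|x|>R}|f|^2<\varepsilon^2$, while you wrote $\le\varepsilon^2$. Strictness is automatic, because the average over $B_\delta$ of a continuous function that is $<\varepsilon^2$ on $B_\delta$ is itself $<\varepsilon^2$.
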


G\'orka \cite{Gorka2014} extended Theorem \ref{thm:pego-compactness} from
$\mathbb{R}^n$ to an arbitrary LCA group $G$. Let us mention that a superfluous technical hypothesis in the original argument was
removed by G\'orka and Kostrzewa two years later \cite{Gorka2016}.

\begin{theorem}[G\'orka \cite{Gorka2014}; G\'orka--Kostrzewa \cite{Gorka2016}]
\label{thm:gorka-pego}
Let $G$ be a locally compact abelian group, and let
$\widehat{G}$ denote its Pontryagin dual, with Fourier transform $f \mapsto
\widehat{f}$ mapping $L^2(G) \to L^2(\widehat{G})$. Let $F \subset
L^2(G)$ be bounded. The following assertions are equivalent.
\begin{enumerate}
\item[(i)] $ F$ is relatively compact in $L^2(G)$.
\item[(ii)] \text{Decay of $ F$ and of $\widehat{ F}$:} for
every $\varepsilon > 0$, there exist compact sets $K \subset G$ and $\widehat K
\subset \widehat G$ such that
$$
        \int_{G \setminus K} |f(x)|^2 \, dx < \varepsilon^2
        \quad\text{and}\quad
        \int_{\widehat{G} \setminus \widehat{K}} |\widehat{f}(\chi)|^2 \, d\chi < \varepsilon^2
        \qquad \text{for all } f \in F.
$$
\item[(iii)] \text{Equicontinuity of $ F$ and of $\widehat{
F}$:} for every $\varepsilon>0$ there is an open neighbourhood $V$ of $e\in G$
and an open neighbourhood $\widehat{V}$ of the trivial character in $\widehat{G}$
such that
$$
\|\tau_xf-f\|_{L^2(G)}<\varepsilon \ \text{ for all } x\in V,
\quad\text{and}\quad
\|\tau_\chi\widehat f-\widehat f\|_{L^2(\widehat G)}<\varepsilon \ \text{ for all
} \chi\in\widehat V,
$$
for all $f\in F$.
\end{enumerate}
\end{theorem}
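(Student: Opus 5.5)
The plan is to prove the three-way equivalence for a bounded $F\subset L^2(G)$ by combining Weil's Theorem \ref{thm:weil-compactness} (with $p=2$) with the Plancherel theorem. The bridge is a duality: decay of $\widehat F$ controls equicontinuity of $F$, and equicontinuity of $\widehat F$ controls decay of $F$. We identify $\widehat{\widehat G}\cong G$ by Pontryagin duality and normalise the Haar measures so that $f\mapsto\widehat f$ is unitary. Two identities then carry the argument. The first is $\widehat{\tau_x f}(\chi)=\overline{\chi(x)}\widehat f(\chi)$, which by Plancherel gives
$$\|\tau_xf-f\|_2^2=\int_{\widehat G}|1-\chi(x)|^2|\widehat f(\chi)|^2\,d\chi .$$
The second is its dual, $\|\tau_\chi\widehat f-\widehat f\|_2^2=\int_G|1-\chi(x)|^2|f(x)|^2\,dx$.

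\emph{(i)$\Rightarrow$(ii),(iii).} Weil's theorem applied to $F$ gives decay and equicontinuity of $F$. Since the Fourier transform is unitary, $\widehat F$ is also relatively compact in $L^2(\widehat G)$. Weil's theorem applied on the LCA group $\widehat G$ then gives decay and equicontinuity of $\widehat F$.

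\emph{(ii)$\Rightarrow$(i).} Let $M$ bound $F$ and fix $\varepsilon>0$. Take a compact set $\widehat K$ with $\int_{\widehat G\setminus\widehat K}|\widehat f|^2<\varepsilon^2$ for all $f\in F$. The pairing $(x,\chi)\mapsto\chi(x)$ is jointly continuous, so by compactness of $\widehat K$ there is a neighbourhood $V$ of $e$ with $|1-\chi(x)|<\varepsilon$ for $x\in V$ and $\chi\in\widehat K$. Splitting the first identity over $\widehat K$ and its complement gives
$$\|\tau_xf-f\|_2^2\le \varepsilon^2M^2+4\varepsilon^2 .$$
Hence $F$ is equicontinuous. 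Together with its boundedness and the assumed decay of $F$, Weil's theorem shows $F$ is relatively compact.

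\emph{(iii)$\Rightarrow$(i).} Here decay of $F$ must be manufactured from equicontinuity of $\widehat F$, and I expect this to be the main obstacle. On $\mathbb R^n$ one can average over $\eta$ in a small ball. In general I would argue as follows.

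First, pick a neighbourhood $\widehat V$ of the trivial character with $\|\tau_\chi\widehat f-\widehat f\|_2<\varepsilon$ on $\widehat V$ for all $f\in F$. Choose a compact neighbourhood $W\subset\widehat V$ of positive finite measure, and set $\phi=|W|^{-1}\mathbf 1_W$. Averaging the dual identity over $\chi\in W$ gives
$$\int_G\Big(\int_{\widehat G}|1-\chi(x)|^2\phi(\chi)\,d\chi\Big)|f(x)|^2\,dx<\varepsilon^2 .$$
Expanding the square, the weight equals $2-2\,\mathrm{Re}\,\check\phi(x)$, where $\check\phi$ is the inverse Fourier transform of $\phi$.

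Second, since $\phi\in L^1(\widehat G)$, the Riemann--Lebesgue lemma makes $\check\phi$ vanish at infinity on $G$. So there is a compact set $K\subset G$ with $|\check\phi|\le1/2$ off $K$, where the weight is therefore at least $1$. This yields $\int_{G\setminus K}|f|^2<\varepsilon^2$ for all $f\in F$.

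Now $F$ satisfies both decay and equicontinuity, so Weil's theorem gives relative compactness. The care needed in this step is correct use of Pontryagin duality and the Riemann--Lebesgue lemma on a general LCA group; this is exactly the superfluous hypothesis that G\'orka--Kostrzewa removed. Finally, (ii) and (iii) are each equivalent to (i), which closes the cycle.
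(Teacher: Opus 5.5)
The paper gives no proof of this theorem. It is quoted from G\'orka and G\'orka--Kostrzewa and used only as a black box, in step (i)$\Rightarrow$(ii) of Theorem~\ref{thm:main}, so there is no in-paper argument to match.

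Your proof is correct, and it is essentially the commutative version of the paper's own lemmas. Your (ii)$\Rightarrow$(i) step is the argument of Lemma~\ref{lem:converse}: split $\|\tau_xf-f\|_2^2$ over a compact $\widehat K$ and its complement, and use joint continuity of the pairing near $\{e\}\times\widehat K$. Your (iii)$\Rightarrow$(i) step is the dual of Lemma~\ref{lem:transfer}.

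The one real difference lies in that last step.
\begin{itemize}
\item The paper convolves with a bump $\phi$ supported in the neighbourhood and bounds $\|g-\phi*g\|$ by Minkowski's inequality for the Bochner integral. Transcribed to your setting, this gives $\|(1-\check\phi)f\|_2<\varepsilon$, hence $\int_{G\setminus K}|f|^2<4\varepsilon^2$ on the region where $|1-\check\phi|\ge\frac12$.
\item You instead average the squared Plancherel identity, which produces the weight $\int_{\widehat G}|1-\chi(x)|^2\phi(\chi)\,d\chi=2-2\,\mathrm{Re}\,\check\phi(x)$. Since $|\check\phi|\le\|\phi\|_1=1$, this weight dominates $|1-\check\phi(x)|^2$, so you get the slightly sharper bound $\varepsilon^2$.
\end{itemize}
Your route is tied to $p=2$ because it relies on the quadratic identity. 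The convolution route works for all $1\le p\le 2$ via Hausdorff--Young, which is why the paper phrases Lemma~\ref{lem:transfer} that way.

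Two minor remarks. First, the interchange of integrals in your averaging step deserves a word, since Haar measure need not be $\sigma$-finite; it is harmless because $\phi$ has compact support and $f\in L^2(G)$ vanishes off a $\sigma$-finite set. Second, your closing claim that this step is ``exactly the superfluous hypothesis'' removed by G\'orka--Kostrzewa is not supported by the paper, which says only that some technical hypothesis was removed. It plays no role in your argument and is better dropped.
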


The aim of this paper is to obtain the operator analogue of Theorems
\ref{thm:pego-compactness}--\ref{thm:gorka-pego}.

\section{Preliminaries on quantum Harmonic analysis }\label{sec:prelim}

We recall the setup of quantum harmonic
analysis on a general Heisenberg phase space due to Fulsche and Galke
\cite{FulscheGalke2025}. We refer to that paper for all unattributed claims in this section.

\subsection{Schatten classes}
For a complex Hilbert space $\mathcal{H}$, let $\mathcal{L}(\mathcal{H})$ denote the
bounded linear operators on $\mathcal{H}$ and $K(\mathcal{H})$ the compact ones. For
$1\le p<\infty$, the {Schatten $p$-class} is
\begin{equation*}
\mathcal{T}^p(\mathcal{H}) = \Big\{A\in K(\mathcal{H}) : \|A\|_{\mathcal{T}^p}=
\Big(\sum_{n=1}^\infty s_n(A)^p\Big)^{1/p}<\infty\Big\},
\end{equation*}
where $(s_n(A))_n$ are the singular values of $A$ (ordered decreasingly and counting multiplicities). We set $\mathcal{T}^\infty(\mathcal{H})=\mathcal{L}(\mathcal{H})$ with $\|\cdot\|_{\mathcal{T}^\infty}=\|\cdot\|_{\mathrm{op}}$, the operator norm. Each $(\mathcal{T}^p(\mathcal{H}),\|\cdot\|_{\mathcal{T}^p})$,
$1\le p\le\infty$, is a complex Banach space. $\mathcal{T}^1(\mathcal{H})$ is called the trace class and 
$\mathcal{T}^2(\mathcal{H})$  the Hilbert-Schmidt class. 

\subsection{Phase space and representation}

Let $\Xi$ be an LCA group with neutral element denoted $0$ and $\widehat{\Xi}$ its Pontryaging dual with neutral element denoted 1. Let  $m:\Xi\times\Xi\to\mathbb T$ be a
separately continuous Heisenberg multiplier; that is,  the bicharacter
$\sigma(\cdot,\cdot)$ defined by $\sigma(x,y)=\displaystyle\frac{ m(x,y)}{m(y,x)}$ induces a topological isomorphism
$\Xi\ni x\mapsto\sigma(x,\cdot)\in\widehat{\Xi}$, so $\Xi$ is canonically self-dual. By
the Mackey-Stone-von Neumann theorem \cite{Mackey1949},  there is a unique (up to unitary equivalence)
irreducible projective unitary representation $(U,\mathcal{H})$ of $\Xi$ with
multiplier $m$; that is
\begin{equation*}
U_xU_y = m(x,y)\,U_{x+y}, \quad x,y\in\Xi. 
\end{equation*}
Consequently, 
\begin{equation*}
U_xU_y=\sigma(x,y)U_yU_x, \quad x,y\in\Xi.
\end{equation*}
We will always assume that $m(x, 0) = m(0, x) = 1$ for every $x\in \Xi$, or equivalently $U_0 = I_{\mathcal{H}}$, the identity operator.

We assume throughout that the irreducible projective unitary representation $(U,\mathcal{H})$ is:
\begin{itemize}
\item {square-integrable}, so that Haar measure on $\Xi$ can be normalized, 
making the orthogonality relations hold on $\mathcal{H}$ \cite[Theorem 2.2]{FulscheGalke2025};
\item {integrable}: there is $0\ne\varphi\in\mathcal{H}$ with
$x\mapsto\langle U_x\varphi,\varphi\rangle\in L^1(\Xi)$;
\item {strongly continuous}: $x\mapsto U_x\varphi$ is continuous
$\Xi\to\mathcal{H}$ for every $\varphi\in \mathcal{H}$.
\end{itemize}
These hold, for instance, whenever $\Xi=G\times\widehat G$ for an LCA group $G$, with $U$ the
Schr\"odinger representation on $\mathcal{H}=L^2(G)$.

\subsection{Convolution of a function and an operator}
For $A\in\mathcal L(\mathcal{H})$ and $x\in\Xi$, define the shift $\alpha_x(A)$ of $A$ by  
$$\alpha_x(A)=U_xAU_x^*.$$

For $f\in L^1(\Xi)$ and $A\in\mathcal{T}^1(\mathcal{H})$, the convolution $f*A$ given by the Bochner integral
\begin{equation*}
f*A= \int_\Xi f(x)\,\alpha_x(A)\,dx
\end{equation*}
defines an element of $\mathcal{T}^1(\mathcal{H})$, with 
\begin{equation*}
\|f*A\|_{\mathcal{T}^1}\le\|f\|_{L^1(\Xi)}\|A\|_{\mathcal{T}^1}.
\end{equation*}
The same integral defines an element of $\mathcal L(\mathcal{H})$ for
$A\in\mathcal L(\mathcal{H})$, with $\|f*A\|_{\mathrm{op}}\le\|f\|_{L^1(\Xi)}\|A\|_{\mathrm{op}}$. By
interpolation  
\begin{equation}
\|f*A\|_{\mathcal{T}^p} \le \|f\|_{L^1(\Xi)}\,\|A\|_{\mathcal{T}^p}, \quad 1\le
p\le\infty. 
\end{equation}
Therefore,  $f*A\in\mathcal{T}^p(\mathcal{H})$ for every $A\in\mathcal{T}^p(\mathcal{H})$.
\subsection{The two Fourier transforms}

For $f\in L^1(\Xi)$, the symplectic Fourier transform is
\begin{equation*}
\mathcal{F}_\sigma(f)(\xi) = \int_\Xi \sigma(x,\xi)f(x)dx, \quad \xi\in\widehat{\Xi}.
\end{equation*}
It satisfies the classical Riemann-Lebesgue lemma; that is, $\mathcal{F}_\sigma(f)\in
C_0(\widehat{\Xi})$ with $$\|\mathcal{F}_\sigma(f)\|_\infty\le\|f\|_{L^1(\Xi)}.$$ (Here $C_0(\widehat{\Xi})$ is the space of  complex continuous functions on $\widehat{\Xi}$ which vanish at $\infty$).

For $A\in\mathcal{T}^1(\mathcal{H})$, the Fourier-Weyl transform is
\begin{equation*}
\mathcal{F}_U(A)(\xi) = \text{tr}(AU_\xi^*), \quad \xi\in\widehat{\Xi}
\end{equation*}
(here, following \cite{FulscheGalke2025}, $\widehat{\Xi}$ denotes $\Xi$ topologically but with
the Haar measure renormalized so that Plancherel's theorem below is unitary). 

For a
rank-one operator $A=\varphi\otimes\psi$, $$\mathcal{F}_U(A)(\xi)=\langle\varphi,U_\xi
\psi\rangle$$ where $\langle \cdot,\cdot \rangle$ is the scalar product in $\mathcal{H}$.

\subsection{Facts about $\mathcal{F}_U$}
\begin{itemize}
\item [\textbf{(Mod)}](\emph{Modulation}). For $A\in\mathcal{T}^1(\mathcal{H})$,  $$\mathcal{F}_U(\alpha_x(A))(\xi)=\sigma(x,\xi)\mathcal{F}_U(A)(\xi), \quad x\in \Xi, \,\xi\in \widehat{\Xi}.$$
\item[\textbf{(Conv)}](\emph{Convolution}). For $f\in L^1(\Xi)$, $A\in\mathcal{T}^1(\mathcal{H})$,
$$\mathcal{F}_U(f*A) = \mathcal{F}_\sigma(f)\cdot\mathcal{F}_U(A).$$

\item[\textbf{(HY)}] \emph{(Hausdorff-Young for $\mathcal{F}_U$).} For $1\le  p\le 2$ with conjugate exponent $q$,
\begin{equation*}
\|\mathcal{F}_U(A)\|_{L^q(\widehat{\Xi})} \le \|A\|_{\mathcal{T}^p}, \quad
A\in\mathcal{T}^p(\mathcal{H}).
\end{equation*}

\item[\textbf{(PL)}] \emph{(Plancherel theorem for operators).} 
$\mathcal{F}_U$ extends to a unitary operator
\begin{equation*}
\mathcal{F}_U : \mathcal{T}^2(\mathcal{H}) \longrightarrow L^2(\widehat{\Xi}),
\end{equation*}
with unitary inverse $$\mathcal{F}_U^{-1}(g)=\int_{\widehat{\Xi}}g(\xi)U_\xi d\xi,$$ which
itself satisfies analogues of the Riemann-Lebesgue and Hausdorff-Young estimates:
\item[ \textbf{(RL$^{-1}$)}]\emph{(Inverse Riemann-Lebesgue).}
For $g\in L^1(\widehat{\Xi})$, $\mathcal{F}_U^{-1}(g)\in K(\mathcal{H})$ with
$$\|\mathcal{F}_U^{-1}(g)\|_{\mathrm{op}}\le\|g\|_{L^1(\widehat{\Xi})};$$ 

\item[\textbf{(HY$^{-1}$)}]\emph{(Inverse Hausdorff-Young).} For
$1\le p\le 2$ with conjugate exponent $q$,
$$\|\mathcal{F}_U^{-1}(g)\|_{\mathcal{T}^q}\le\|g\|_{L^p(\widehat{\Xi})}, \quad g\in
L^p(\widehat{\Xi}).$$ 
\end{itemize}

\section{The quantum Pego theorem}\label{sec:quantumPego}
We recall the following definition. 

\begin{definition}[Totally bounded set]
\label{def:totally-bounded}
Let $(X, d)$ be a metric space. A subset $A \subset X$ is said to be \emph{totally bounded}  if, for every $\varepsilon > 0$, there exists a finite set of points $x_1, \dots, x_n \in X$ such that
\[
    A \subset \bigcup_{i=1}^{n} B(x_i, \varepsilon),
\]
where $B(x_i, \varepsilon) = \{ y \in X : d(x_i, y) < \varepsilon \}$. 
\end{definition}

\begin{remark}
\label{rem:totally-bounded}{\rm
 For a subset of a complete metric space, precompactness is equivalent to
total boundedness. We use this
equivalence repeatedly throughout.  
}\end{remark}

We now formulate, for operator families, the notion of uniform equicontinuity and uniform decay  that drive the Pego-type theorem for operators.

\begin{definition}
\label{def:opeq}
Let $1\le p\le 2$ and  $F\subset\mathcal{T}^p(\mathcal{H})$. We say that $F$ is
uniformly $\mathcal{T}^p$-equicontinuous if for every $\varepsilon>0$, there is
an open neighbourhood $V$ of $0\in\Xi$ such that
\begin{equation*}
\|\alpha_x(A)-A\|_{\mathcal{T}^p} < \varepsilon, \quad \text{for all } A\in F,\ x\in V.
\end{equation*}
\end{definition}

\begin{definition}
\label{def:funceq}

\begin{itemize}
\item[(i)] Let $S\subset L^q(\widehat{\Xi})$, $2\le q\le \infty$.  We say that $S$ is uniformly $L^q(\widehat{\Xi})$-equicontinuous if for every
$\varepsilon>0$ there is an open neighbourhood $V$ of the trivial character $1\in \widehat{\Xi}$ such that
$$\|L_\eta g-g\|_{L^q}<\varepsilon, \text{ for all } g\in S,\ \eta\in V.$$

\item[(ii)] We say  that $S$ has
uniform $L^q(\widehat{\Xi})$-decay if for every $\varepsilon>0$ there is a compact
$K\subset\widehat{\Xi}$ such that $$\int_{\widehat{\Xi}\setminus K}|g(\xi)|^q\,d\xi < \varepsilon,  \text{ for all } g\in S.$$
\end{itemize}
\end{definition}

\bigskip

We outline some facts to be used in the proof of the subsequent results.

\begin{itemize}
\item[(F1)] 
Unitary conjugation is an isometry in $p$-Schatten class, for every $p$. Indeed,  if
$V$ is unitary and $A$ compact, then $(VAV^*)^*(VAV^*) = VA^*AV^*$ is unitarily
equivalent to $A^*A$, hence has the same eigenvalues; so $VAV^*$ and $A$ have the
same singular values. Therefore, $\|VAV^*\|_{\mathcal{T}^p} = \|A\|_{\mathcal{T}^p}$.

\item[(F2)]  If $A$ has
rank $\le n$ with singular values $s_1,\dots,s_n$, and $1\le p\le 2$, then by
H\"older's inequality applied to $\sum_i s_i^p\cdot 1$ (exponent $\frac{2}{p}$ and its conjugate),  we have
\begin{equation*}
\|A\|_{\mathcal{T}^p} = \Big(\sum_{i=1}^n s_i^p\Big)^{1/p}
\le n^{\frac1p-\frac12}\Big(\sum_{i=1}^n s_i^2\Big)^{1/2} = n^{\frac1p-\frac12}\|A\|_{\mathcal{T}^2}.
\end{equation*}

\item[(F3)] The Hilbert-Schmidt norm of the rank-one operator  $\varphi\otimes\psi$  is 
$\|\varphi\otimes\psi\|_{\mathcal{T}^2} = \|\varphi\|_{\mathcal{H}}\|\psi\|_{\mathcal{H}}$. Indeed, let $(e_n)$ be an
orthonormal basis of $\mathcal{H}$. We have 
\begin{align*}
\|\varphi\otimes\psi\|_{\mathcal{T}^2}^2 &=\sum\limits_n
\|(\varphi\otimes\psi)e_n\|_{\mathcal{H}}^2 \\
&=\sum\limits_n
\|\langle e_n,\psi \rangle\varphi\|_{\mathcal{H}}^2 \\
&=\|\varphi\|_{\mathcal{H}}^2\sum\limits_n
|\langle e_n,\psi \rangle|^2 \\
&=  \|\psi\|_{\mathcal{H}}^2\|\varphi\|_{\mathcal{H}}^2\\ 
& \text{ (by the Parseval equality). } 
\end{align*}

\item[(F4)] Finite-rank operators are norm-dense in $\mathcal{T}^p(\mathcal{H})$ for $1\le
p<\infty$.  Indeed, for $A=\sum\limits_n s_n\varphi_n\otimes
\psi_n$ with singular values $(s_n)$, the finite-rank truncations $A_N=\sum\limits_{n\le N}
s_n\varphi_n\otimes\psi_n$ satisfy 
$$\|A-A_N\|_{\mathcal{T}^p}^p=\sum\limits_{n>N}s_n^p$$ which tends to $0$ at infinity since the series $\sum\limits_{n}s_n^p$ converges.

\item[(F5)] For $x,y\in\Xi$, we have $\alpha_x\circ \alpha_y=\alpha_{x+y}$. Indeed, 
$
\alpha_x(\alpha_y(A)) = U_x(U_yAU_y^*)U_x^* = (U_xU_y)A(U_xU_y)^*
= m(x,y)U_{x+y}A\,\overline{m(x,y)}\,U_{x+y}^* = |m(x,y)|^2\alpha_{x+y}(A)=\alpha_{x+y}(A),$
since $|m(x,y)|=1$. 
\end{itemize}

\bigskip

We prove the following lemma.
\begin{lemma}
\label{lem:shiftcont}
For every $A\in\mathcal{T}^p(\mathcal{H}),\, 1\le p\le 2$, the map $x\mapsto\alpha_x(A)$  defined from  $\Xi$ into $\mathcal{T}^p(\mathcal{H})$
is continuous  and $\|\alpha_x(A)\|_{\mathcal{T}^p} = \|A\|_{\mathcal{T}^p}$ for all $x\in \Xi$.
\end{lemma}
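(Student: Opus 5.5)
The isometry claim is immediate from (F1): $U_x$ is unitary, so $\alpha_x(A)=U_xAU_x^*$ has the same singular values as $A$, and hence $\|\alpha_x(A)\|_{\mathcal{T}^p}=\|A\|_{\mathcal{T}^p}$. For continuity we use the group law (F5) together with this isometry:
$$\|\alpha_{x+h}(A)-\alpha_x(A)\|_{\mathcal{T}^p}=\|\alpha_x(\alpha_h(A)-A)\|_{\mathcal{T}^p}=\|\alpha_h(A)-A\|_{\mathcal{T}^p}.$$
So it suffices to prove continuity at $0$, i.e. $\|\alpha_h(A)-A\|_{\mathcal{T}^p}\to 0$ as $h\to 0$. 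Since $\Xi$ need not be metrizable, "as $h\to0$" is understood in the sense of neighbourhoods (or nets).

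We first reduce to finite-rank operators by an $\varepsilon/3$ argument. By (F4), choose a finite-rank $B$ with $\|A-B\|_{\mathcal{T}^p}<\varepsilon/3$. By the isometry just proved,
$$\|\alpha_h(A)-A\|_{\mathcal{T}^p}\le \|\alpha_h(A-B)\|_{\mathcal{T}^p}+\|\alpha_h(B)-B\|_{\mathcal{T}^p}+\|B-A\|_{\mathcal{T}^p} < \tfrac{2\varepsilon}{3}+\|\alpha_h(B)-B\|_{\mathcal{T}^p}.$$
Suppose $B$ has rank $n$. Then $\alpha_h(B)-B$ has rank at most $2n$, so (F2) gives $\|\alpha_h(B)-B\|_{\mathcal{T}^p}\le (2n)^{\frac1p-\frac12}\|\alpha_h(B)-B\|_{\mathcal{T}^2}$. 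It is therefore enough to show Hilbert--Schmidt continuity for finite-rank $B$. By linearity, this reduces further to a single rank-one operator $\varphi\otimes\psi$.

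For the rank-one case, note that $\alpha_h(\varphi\otimes\psi)=(U_h\varphi)\otimes(U_h\psi)$. This follows from $U_h(\varphi\otimes\psi)U_h^*e=\langle U_h^*e,\psi\rangle U_h\varphi=\langle e,U_h\psi\rangle U_h\varphi$. Writing the difference as
$$(U_h\varphi-\varphi)\otimes U_h\psi+\varphi\otimes(U_h\psi-\psi)$$
and applying (F3), we obtain
$$\|\alpha_h(\varphi\otimes\psi)-\varphi\otimes\psi\|_{\mathcal{T}^2}\le \|U_h\varphi-\varphi\|\,\|\psi\|+\|\varphi\|\,\|U_h\psi-\psi\|.$$
The right-hand side tends to $0$ as $h\to0$ by strong continuity of $U$ together with $U_0=I_{\mathcal H}$. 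We then choose a neighbourhood $V$ of $0$ on which $\|\alpha_h(B)-B\|_{\mathcal{T}^p}<\varepsilon/3$, which completes the argument.

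The main obstacle is the passage from $\mathcal{T}^2$ to $\mathcal{T}^p$ for $p<2$, because the Hilbert--Schmidt norm does not control the $\mathcal{T}^p$ norm in general. The rank bound (F2) handles this, and it is the reason the argument goes through finite-rank approximation rather than a direct estimate on $A$. One could avoid (F2) by estimating the trace norm of a rank-one operator directly, using $\|\varphi\otimes\psi\|_{\mathcal{T}^p}=\|\varphi\|\|\psi\|$; but the route above stays within the facts already listed.
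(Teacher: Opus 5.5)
Your proof is correct and follows essentially the same route as the paper. It uses the isometry from (F1), the rank-one $\mathcal{T}^2$ estimate via $\alpha_h(\varphi\otimes\psi)=(U_h\varphi)\otimes(U_h\psi)$ and (F3), the passage to $\mathcal{T}^p$ through (F2), an $\varepsilon/3$ finite-rank approximation from (F4), and the reduction to continuity at $0$ via (F5); the only cosmetic difference is that you apply (F2) once to the rank-$\le 2n$ operator $\alpha_h(B)-B$, whereas the paper applies it with $n=2$ to each rank-one piece and then sums in $\mathcal{T}^p$.
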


\begin{proof}
\begin{itemize}
\item Applying (F1) with $V=U_x$ yields the isometry assertion. 

\item We now show the continuity at $x=0$ for rank-one operators. Let $A=\varphi\otimes\psi$. We have 
\begin{align*}
 U_xAU_x^*\phi &= U_x\langle U_x^*\phi,\psi\rangle\varphi \\
 &=
\langle\phi,U_x\psi\rangle U_x\varphi,
\end{align*}
that is
\begin{equation*}
\alpha_x(\varphi\otimes\psi) = (U_x\varphi)\otimes(U_x\psi).
\end{equation*}
Then, 
\begin{equation*}
\alpha_x(A)-A = (U_x\varphi-\varphi)\otimes(U_x\psi) + \varphi\otimes(U_x\psi-\psi).
\end{equation*}
 We have 
\begin{align*}
\|\alpha_x(A)-A\|_{\mathcal{T}^2} &\le \|U_x\varphi-\varphi\|_{\mathcal{H}}\|U_x\psi\|_{\mathcal{H}} +
\|\varphi\|_{\mathcal{H}}\|U_x\psi-\psi\|_{\mathcal{H}}\\
& \text{(by the triangle inequality and  (F3))}\\
&= \|U_x\varphi-\varphi\|_{\mathcal{H}}\|\psi\|_{\mathcal{H}} +
\|\varphi\|_{\mathcal{H}}\|U_x\psi-\psi\|_{\mathcal{H}}\\
& (\text{ by the unitarity of } U_x).
\end{align*}
The right hand side tends to 0 when $x$  tends to 0 in $\Xi$ 
by the strong continuity of the projective representation $U$. Using  (F2) with $n=2$, we obtain
$$\|\alpha_x(A)-A\|_{\mathcal{T}^p} \le 2^{\frac1p-\frac12}\|\alpha_x(A)-A\|_{\mathcal{T}^2}
\quad \text{ which tends to } 0$$ as $x \text{ tends to } 0$. 

\item  For a finite-rank operator $A=\sum\limits_{i=1}^n\varphi_i\otimes\psi_i$, the linearity of
$\alpha_x$ and the triangle inequality give
$$
\|\alpha_x(A)-A\|_{\mathcal{T}^p} \le \sum_{i=1}^n
\|\alpha_x(\varphi_i\otimes\psi_i)-\varphi_i\otimes\psi_i\|_{\mathcal{T}^p}
\quad \text{ which tends to } 0$$ 
as $x \text{  tends to } 0
$ as a finite sum of terms each vanishing by the previous paragraph.

\item Consider a general 
 $A\in\mathcal{T}^p(\mathcal{H})$.   Fix $\varepsilon>0$. By (F4),  choose a
finite-rank operator $A'$ such that $\|A-A'\|_{\mathcal{T}^p}<\frac{\varepsilon}{3}$.  By the previous
paragraph choose a neighbourhood $V\ni 0$ such that  $\|\alpha_x(A')-A'\|_{\mathcal
T^p}<\frac{\varepsilon}{3}$ for $x\in V$. For $x\in V$, we  have 

\begin{align*}
\|\alpha_x(A)-A\|_{\mathcal{T}^p}& \le \|\alpha_x(A)-\alpha_x(A')\|_{\mathcal
T^p} + \|\alpha_x(A')-A'\|_{\mathcal
T^p} + \|A'-A\|_{\mathcal{T}^p} \\
&\le \|A-A'\|_{\mathcal
T^p} + \|\alpha_x(A')-A'\|_{\mathcal
T^p} + \|A'-A\|_{\mathcal{T}^p}\\
&(\text{ by the linearity of $\alpha_x$ and its  isometry
property})\\
&< \frac{\varepsilon}{3}+\frac{\varepsilon}{3}+\frac{\varepsilon}{3}=\varepsilon.
\end{align*}
Hence, the map $x\mapsto \alpha_x$ is continuous at $0$.
\item We can now prove the continuity at an arbitrary $x_0$. By (F5) and the isometry property, we have 
\begin{align*}
\|\alpha_x(A)-\alpha_{x_0}(A)\|_{\mathcal{T}^p}
&= \big\|\alpha_{x_0}\big(\alpha_{x-x_0}(A)-A\big)\big\|_{\mathcal{T}^p}\\
&(\text{ by (F5)})\\
&= \|\alpha_{x-x_0}(A)-A\|_{\mathcal{T}^p}\\
&(\text{by isometry})
 \end{align*}
 which tends to $0$ as $x$ goes to $x_0$
by continuity at $0$. 
\end{itemize}
\end{proof}

\begin{lemma}
\label{lem:precompacteq}
If $F\subset\mathcal{T}^p(\mathcal{H}),\, 1\le p\le 2,$ is precompact, then $F$ is uniformly $\mathcal
T^p$-equicontinuous.
\end{lemma}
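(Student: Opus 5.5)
The plan is the standard $\varepsilon/3$ argument: use total boundedness (Remark~\ref{rem:totally-bounded}) to reduce to finitely many operators, apply Lemma~\ref{lem:shiftcont} to each of them, and use the isometry of $\alpha_x$ to pass back to all of $F$. Since $\mathcal{T}^p(\mathcal{H})$ is a Banach space, the precompact set $F$ is totally bounded.

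Fix $\varepsilon>0$. Total boundedness gives finitely many operators $A_1,\dots,A_n\in\mathcal{T}^p(\mathcal{H})$ such that every $A\in F$ satisfies $\|A-A_i\|_{\mathcal{T}^p}<\varepsilon/3$ for some $i$. The centers are allowed to lie in the ambient space $\mathcal{T}^p(\mathcal{H})$ rather than in $F$; this does not matter, since Lemma~\ref{lem:shiftcont} applies to any element of $\mathcal{T}^p(\mathcal{H})$.

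For each $i$, Lemma~\ref{lem:shiftcont} (continuity at $0$, together with $\alpha_0=\mathrm{id}$ because $U_0=I_{\mathcal{H}}$) gives an open neighbourhood $V_i$ of $0$ with $\|\alpha_x(A_i)-A_i\|_{\mathcal{T}^p}<\varepsilon/3$ for all $x\in V_i$. Set $V=\bigcap_{i=1}^n V_i$. This is a finite intersection, so $V$ is still an open neighbourhood of $0$.

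Now take $A\in F$ and $x\in V$, and choose $i$ with $\|A-A_i\|_{\mathcal{T}^p}<\varepsilon/3$. By linearity of $\alpha_x$ and the triangle inequality,
$$\|\alpha_x(A)-A\|_{\mathcal{T}^p}\le \|\alpha_x(A-A_i)\|_{\mathcal{T}^p}+\|\alpha_x(A_i)-A_i\|_{\mathcal{T}^p}+\|A_i-A\|_{\mathcal{T}^p}.$$
The first term equals $\|A-A_i\|_{\mathcal{T}^p}$ by the isometry part of Lemma~\ref{lem:shiftcont}, i.e.\ by (F1). Each of the three terms is therefore less than $\varepsilon/3$, so the total is less than $\varepsilon$. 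Since $V$ does not depend on $A$, this is exactly uniform $\mathcal{T}^p$-equicontinuity in the sense of Definition~\ref{def:opeq}.

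There is no real obstacle. The only point needing care is that a single neighbourhood must work uniformly over $F$, and this is precisely what the finiteness of the net provides.
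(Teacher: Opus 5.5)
Your proposal is correct and matches the paper's proof essentially step for step. Both take an $\varepsilon/3$-net $A_1,\dots,A_n$ from total boundedness, obtain a neighbourhood $V_i$ for each center via Lemma~\ref{lem:shiftcont}, intersect them into $V$, and use the isometry of $\alpha_x$ to bound $\|\alpha_x(A)-\alpha_x(A_i)\|_{\mathcal{T}^p}$ by $\|A-A_i\|_{\mathcal{T}^p}$.
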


\begin{proof}

Fix $\varepsilon>0$.
  By precompactness hence total boundedness  of $F$,   with
$\eta=\frac{\varepsilon}{3}$, there exist $A_1,\dots,A_n\in\mathcal{T}^p(\mathcal{H})$ such that
\begin{equation}\label{eq:varepsilonnet}
F \subset \bigcup_{i=1}^n\left\{T:\|T-A_i\|_{\mathcal{T}^p}<\frac{\varepsilon}{3}\right\}.
\end{equation}
 
 By Lemma
\ref{lem:shiftcont}, applied to each of the  operators
$A_1,\dots,A_n$, we obtain that  for each $i$,  there exists a neighbourhood $V_i\ni0$ such that
$$\|\alpha_x(A_i)-A_i\|_{\mathcal{T}^p}<\frac{\varepsilon}{3}$$ for $x\in V_i$. Set
$V=\bigcap\limits_{i=1}^n V_i$.  Being a finite intersection of neighbourhoods of
$0$, $V$ is again a neighbourhood of $0$.
 
  Let $A\in F$ and $x\in V$. By
(\ref{eq:varepsilonnet}) pick $i$ such that $\|A-A_i\|_{\mathcal{T}^p}<\frac{\varepsilon}{3}$. Then
\begin{align*}
\|\alpha_x(A)-A\|_{\mathcal{T}^p} &\le
\|\alpha_x(A)-\alpha_x(A_i)\|_{\mathcal{T}^p} +
\|\alpha_x(A_i)-A_i\|_{\mathcal{T}^p} +
\|A_i-A\|_{\mathcal{T}^p}\\
&=\|A_i-A\|_{\mathcal{T}^p}+
\|\alpha_x(A_i)-A_i\|_{\mathcal{T}^p} +
\|A_i-A\|_{\mathcal{T}^p}\\
&< \frac{\varepsilon}{3}+\frac{\varepsilon}{3}+\frac{\varepsilon}{3}=\varepsilon.
\end{align*}
Thus, $F$ is uniformly $\mathcal
T^p$-equicontinuous.
\end{proof}

\begin{lemma}
\label{lem:transfer}
 Let $1\le p\le 2$ and let $q$ be the conjugate exponent of $p$. If
$F\subset\mathcal{T}^p(\mathcal{H})$ is bounded and uniformly $\mathcal{T}^p$-equicontinuous, then
$ \mathcal{F}_U(F) \subset L^q(\widehat{\Xi})$ has uniform $L^q(\widehat{\Xi})$-decay.
\end{lemma}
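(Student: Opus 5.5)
The plan is the classical Riesz--Kolmogorov approximate-identity argument, transported through \textbf{(Conv)} and \textbf{(HY)}. Let $M=\sup_{A\in F}\|A\|_{\mathcal{T}^p}<\infty$. Fix $\varepsilon>0$ and let $\varepsilon'>0$ be a small auxiliary tolerance, to be fixed at the end in terms of $\varepsilon$ and $q$. By uniform $\mathcal{T}^p$-equicontinuity, choose an open neighbourhood $V$ of $0\in\Xi$ with $\|\alpha_x(A)-A\|_{\mathcal{T}^p}<\varepsilon'$ for all $A\in F$ and $x\in V$. Pick $f\in L^1(\Xi)$ with $f\ge 0$, $\int_\Xi f=1$ and $\supp f\subset V$. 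For instance, take $f=1_W/|W|$ with $W\subset V$ a relatively compact neighbourhood of $0$, which has positive finite Haar measure.

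The first step is to show that $f*A$ approximates $A$ uniformly on $F$. Since $\int f=1$, we have $f*A-A=\int_\Xi f(x)\,(\alpha_x(A)-A)\,dx$. This Bochner integral makes sense in $\mathcal{T}^p$ by Lemma \ref{lem:shiftcont}, which gives continuity and boundedness of $x\mapsto\alpha_x(A)$. Hence
$$\|f*A-A\|_{\mathcal{T}^p}\le\int_V f(x)\,\|\alpha_x(A)-A\|_{\mathcal{T}^p}\,dx\le\varepsilon'$$
for every $A\in F$.

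The second step transfers this to the Fourier side. By the Riemann--Lebesgue property of $\mathcal{F}_\sigma$, we have $\mathcal{F}_\sigma(f)\in C_0(\widehat{\Xi})$. So there is a compact $K\subset\widehat{\Xi}$ with $|\mathcal{F}_\sigma(f)(\xi)|<\varepsilon'/(M+1)$ for $\xi\notin K$, and $K$ does not depend on $A$. By \textbf{(Conv)}, $\mathcal{F}_U(f*A)=\mathcal{F}_\sigma(f)\,\mathcal{F}_U(A)$; this holds initially for $A\in\mathcal{T}^1$. For $p>1$ I would extend it to $\mathcal{T}^p$ by density of finite-rank operators (F4), using continuity of both sides in $\mathcal{T}^p$: the right side is continuous by \textbf{(HY)} and $|\mathcal{F}_\sigma f|\le 1$, and the left side by the convolution bound together with \textbf{(HY)}. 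This extension is the only slightly delicate point, and it is routine. Then, for each $A\in F$,
$$\|1_{\widehat{\Xi}\setminus K}\mathcal{F}_U(A)\|_{L^q}\le \|\mathcal{F}_U(A-f*A)\|_{L^q}+\|1_{\widehat{\Xi}\setminus K}\mathcal{F}_\sigma(f)\mathcal{F}_U(A)\|_{L^q}\le \|A-f*A\|_{\mathcal{T}^p}+\frac{\varepsilon'}{M+1}\|A\|_{\mathcal{T}^p},$$
where \textbf{(HY)} is applied to both terms. The right-hand side is less than $2\varepsilon'$.

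Finally, for $q<\infty$ I choose $\varepsilon'$ so that $(2\varepsilon')^q<\varepsilon$; this gives $\int_{\widehat{\Xi}\setminus K}|\mathcal{F}_U(A)|^q<\varepsilon$ uniformly in $A\in F$, which is the claimed uniform $L^q(\widehat{\Xi})$-decay. In the endpoint $p=1$, $q=\infty$, the integral in Definition \ref{def:funceq}(ii) must be read as the essential supremum over $\widehat{\Xi}\setminus K$. The same estimate with $\varepsilon'=\varepsilon/2$ then gives the conclusion.
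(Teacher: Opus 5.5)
Your proof is correct and is essentially the paper's argument: mollify by an approximate identity supported in $V$, pass to the Fourier side with \textbf{(Conv)} and \textbf{(HY)}, and use $\mathcal{F}_\sigma(f)\in C_0(\widehat{\Xi})$ to produce $K$. The only real divergence is the last step, where the paper chooses $K$ with $|\mathcal{F}_\sigma(\phi)|\le\frac12$ off $K$ and uses $|\mathcal{F}_U(A)|\le 2\,|(1-\mathcal{F}_\sigma(\phi))\,\mathcal{F}_U(A)|$ there, so it never needs the bound $M$ (the boundedness hypothesis is not used there), whereas your triangle-inequality split does; in return, your extension of \textbf{(Conv)} from $\mathcal{T}^1$ to $\mathcal{T}^p$ by density and your separate treatment of $q=\infty$ (where the paper's tolerance $\varepsilon^{1/q}/2$ degenerates to $\frac12$) supply steps the paper leaves out.
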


\begin{proof}
Fix $\varepsilon>0$. By uniform $\mathcal{T}^p$-equicontinuity of $F$, there is an open
neighbourhood $V\ni 0$ in $\Xi$ such that
\begin{equation} \label{eq:alpha}
\|A - \alpha_y(A)\|_{\mathcal{T}^p} < \frac{\varepsilon^{1/q}}{2} \qquad \text{for all } A\in F,\ y\in
V. 
\end{equation}
Consider a function  $\phi\in L^1(\Xi)$ such that  $\phi\ge0$, $\displaystyle\int_{\Xi}\phi=1$, with $\supp\phi\subset V$
(the existence of $\phi$ is a consequence of Urysohn's lemma). For
$A\in F$, we have
\begin{equation*}
A - \phi*A = \int_{\Xi} \phi(y)\big(A-\alpha_y(A)\big)dy. 
\end{equation*}
 Therefore, 
\begin{equation}
\|A-\phi*A\|_{\mathcal{T}^p} \le \int_{\Xi}\phi(y)\,\|A-\alpha_y(A)\|_{\mathcal
T^p}dy < \frac{\varepsilon^{1/q}}{2}\int_{\Xi}\phi(y)dy=\frac{\varepsilon^{1/q}}{2}. 
\end{equation}
By \textbf{(Conv)} and \textbf{(HY)}, we have
\begin{equation}\label{eq:ConvHY}
\big\|\mathcal{F}_U(A) - \mathcal{F}_\sigma(\phi)\mathcal{F}_U(A)\big\|_{L^q(\widehat{\Xi})}
= \big\|\mathcal{F}_U(A-\phi*A)\big\|_{L^q(\widehat{\Xi})}
\le \|A-\phi*A\|_{\mathcal{T}^p} < \frac{\varepsilon^{1/q}}{2}. 
\end{equation}
By the classical Riemann-Lebesgue lemma for the symplectic Fourier transform,
$\mathcal{F}_\sigma(\phi)\in C_0(\widehat{\Xi})$. Hence, there is a compact set
$K\subset\widehat{\Xi}$ such that $|\mathcal{F}_\sigma(\phi)(\xi)|\le\frac{1}{2}$ for $\xi\notin K$. For such
$\xi$, $|1-\mathcal{F}_\sigma(\phi)(\xi)|\ge\frac{1}{2}$.
Therefore,  for $\xi\notin K$,
\begin{align*}
|\mathcal{F}_U(A)(\xi)|\le 2|\mathcal{F}_U(A)(\xi)||1-\mathcal{F}_\sigma(\phi)(\xi)|. 
\end{align*}

We have 
\begin{align*}
\left(\int_{\widehat{\Xi}\setminus K}|\mathcal{F}_U(A)(\xi)|^qd\xi\right)^{1/q}& \le 2\left(\int_{\widehat{\Xi}\setminus K}\big|\mathcal{F}_U(A)(\xi)\,(1-\mathcal{F}_\sigma(\phi)(\xi))\big|^q\,d\xi\right)^{1/q}\\
&\le 2\left(\int_{\widehat{\Xi}}\big|\mathcal{F}_U(A)(\xi)\,(1-\mathcal{F}_\sigma(\phi)(\xi))\big|^q\,d\xi\right)^{1/q}\\
&< 2\frac{\varepsilon^{1/q}}{2}=\varepsilon^{1/q}\qquad(\text{by (\ref{eq:ConvHY})}).
\end{align*}
Thus, $\displaystyle\int_{\widehat{\Xi}\setminus K}|\mathcal{F}_U(A)(\xi)|^q\,d\xi<\varepsilon$; that is, $\mathcal{F}_U(F)$ has uniform $L^q(\widehat{\Xi})$-decay.
\end{proof}

\begin{lemma}
\label{lem:converse}
 Let $F\subset\mathcal{T}^2(\mathcal{H})$ be bounded. If $\mathcal{F}_U(F)\subset
L^2(\widehat{\Xi})$ has uniform $L^2(\widehat{\Xi})$-decay, then $F$ is uniformly $\mathcal{T}^2$-equicontinuous.
\end{lemma}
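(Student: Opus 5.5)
We transfer the problem to the Fourier side via Plancherel. By \textbf{(PL)} and \textbf{(Mod)}, for $A\in F$ (first for $A\in\mathcal{T}^1\cap\mathcal{T}^2$, then by density and continuity of both sides in $\mathcal{T}^2$) we get
\begin{equation*}
\|\alpha_x(A)-A\|_{\mathcal{T}^2}=\|\mathcal{F}_U(\alpha_x(A))-\mathcal{F}_U(A)\|_{L^2(\widehat{\Xi})}
=\Big(\int_{\widehat{\Xi}}|\sigma(x,\xi)-1|^2\,|\mathcal{F}_U(A)(\xi)|^2\,d\xi\Big)^{1/2}.
\end{equation*}
The density step is legitimate: $\alpha_x$ is a $\mathcal{T}^2$-isometry by Lemma \ref{lem:shiftcont}, multiplication by the unimodular $\sigma(x,\cdot)$ is an isometry of $L^2$, and $\mathcal{T}^1$ is dense in $\mathcal{T}^2$ by (F4). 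So the claim becomes a statement about the weighted $L^2$-norm of $g=\mathcal{F}_U(A)$ with weight $|\sigma(x,\cdot)-1|^2$.

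Fix $\varepsilon>0$ and let $M$ bound $\|A\|_{\mathcal{T}^2}=\|\mathcal{F}_U(A)\|_{L^2}$ on $F$. By the uniform decay hypothesis, choose a compact $K\subset\widehat{\Xi}$ with $\int_{\widehat{\Xi}\setminus K}|\mathcal{F}_U(A)|^2<\varepsilon^2/8$ for all $A\in F$. Split the integral over $K$ and its complement. Since $|\sigma(x,\xi)-1|\le 2$, the contribution from the complement is at most $4\cdot\varepsilon^2/8=\varepsilon^2/2$. On $K$ the contribution is bounded by $\sup_{\xi\in K}|\sigma(x,\xi)-1|^2\,M^2$.

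The key step, and the only real obstacle, is to find an open neighbourhood $V$ of $0\in\Xi$ such that $\sup_{\xi\in K}|\sigma(x,\xi)-1|<\varepsilon/(\sqrt2\,(M+1))$ for all $x\in V$. This follows because $x\mapsto\sigma(x,\cdot)$ is a topological isomorphism $\Xi\to\widehat{\Xi}$. The topology of $\widehat{\Xi}$ is that of uniform convergence on compacta, so the set $\{\chi:\sup_{\xi\in K}|\chi(\xi)-1|<\delta\}$ is an open neighbourhood of the trivial character. Its preimage under the isomorphism is the required $V$.

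Note that here $K$ lives in $\widehat{\Xi}$, which is $\Xi$ as a topological space, so we evaluate characters $\sigma(x,\cdot)$ at points of $\Xi$. By the symmetry $\sigma(x,\xi)=\overline{\sigma(\xi,x)}$, we may equivalently use that $\xi\mapsto\sigma(\xi,\cdot)$ is an isomorphism. One can also argue directly: the bicharacter $\sigma$ is jointly continuous (it is separately continuous on a locally compact group), and a compactness argument over $K$ then yields uniformity in $\xi$.

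Combining the two pieces, for $x\in V$ and every $A\in F$ we get $\|\alpha_x(A)-A\|_{\mathcal{T}^2}^2<\varepsilon^2/2+\varepsilon^2/2=\varepsilon^2$. This is exactly uniform $\mathcal{T}^2$-equicontinuity.
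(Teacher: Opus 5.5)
Your proof is correct and follows essentially the same route as the paper: you use Plancherel and \textbf{(Mod)} to rewrite $\|\alpha_x(A)-A\|_{\mathcal{T}^2}^2$ as $\int_{\widehat{\Xi}}|\sigma(x,\xi)-1|^2|\mathcal{F}_U(A)(\xi)|^2\,d\xi$, split over the decay compact $K$ (where $|\sigma-1|\le 2$ controls the tail), and then make $\sup_{\xi\in K}|\sigma(x,\xi)-1|$ small for $x$ near $0$. Your small refinements are all valid and slightly tighten the paper's argument: the explicit density step extending \textbf{(Mod)} from $\mathcal{T}^1$ to $\mathcal{T}^2$, the use of $M+1$ to avoid the case $M=0$, and reading the uniform estimate on $K$ directly off the compact-open topology of the dual instead of from joint continuity of $\sigma$.
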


\begin{proof}
Set $M=\sup\limits_{A\in F}\|A\|_{\mathcal{T}^2}<\infty$.  By \textbf{(PL)},
$\sup\limits_{A\in F}\|\mathcal{F}_U(A)\|_{L^2}=M$ as well. If $M=0$ then $F=\{0\}$ and there
is nothing to prove, so assume $M>0$.

Fix $\varepsilon>0$. By uniform $L^2(\widehat{\Xi})$-decay of $\mathcal{F}_U(F)$, there exists a compact
$K\subset\widehat{\Xi}$ such that
\begin{equation}\label{eq:decayhyp}
\int_{\widehat{\Xi}\setminus K}|\mathcal{F}_U(A)(\xi)|^2\,d\xi < \frac{\varepsilon^2}{8}\quad\text{for all } A\in F.
\end{equation}

For $A\in F$ and $x\in\Xi$, we have
\begin{equation*}
\|\alpha_x(A)-A\|_{\mathcal{T}^2}^2 = \big\|\mathcal{F}_U(\alpha_x(A))-\mathcal{F}_U(A)\big\|_{L^2}^2.
\end{equation*}
Since,  $\mathcal{F}_U(\alpha_x(A))(\xi)=\sigma(x,\xi)\mathcal{F}_U(A)(\xi)$, we have 
\begin{align*}
\|\alpha_x(A)-A\|_{\mathcal{T}^2}^2 &= \int_{\widehat{\Xi}}|\sigma(x,\xi)-1|^2\,|\mathcal{F}_U(A)(\xi)|^2d\xi\\
&= \int_K|\sigma(x,\xi)-1|^2\,|\mathcal{F}_U(A)(\xi)|^2\xi + \int_{\widehat{\Xi}\setminus K}|\sigma(x,\xi)-1|^2\,|\mathcal{F}_U(A)(\xi)|^2d\xi.
\end{align*}

 Since $|\sigma(x,\xi)|=1$, we have $|\sigma(x,\xi)-1|\le2$. So,  by \eqref{eq:decayhyp}, 
\begin{equation}\label{k1}
\int_{\widehat{\Xi}\setminus K}|\sigma(x,\xi)-1|^2\,|\mathcal{F}_U(A)(\xi)|^2\,d\xi \le 4\int_{\widehat{\Xi}\setminus K}|\mathcal{F}_U(A)(\xi)|^2\,d\xi
< 4\cdot\frac{\varepsilon^2}{8} = \frac{\varepsilon^2}{2}.
\end{equation}

Furthermore, the pairing $\sigma:\Xi\times\widehat{\Xi}\to\mathbb T$ is jointly
continuous, and $\sigma(0,\xi)=1$ for every $\xi\in\widehat{\Xi}$ (because $x\mapsto
\sigma(x,\cdot)$ is a continuous group homomorphism $\Xi\to\widehat{\Xi}$, so $\sigma(0,\cdot)$
is the trivial character in $\widehat{\Xi}$). Since $K$ is compact, joint continuity gives uniform continuity. So, there is an open
neighbourhood $V\ni 0$ such that
\begin{equation*}
\sup_{\xi\in K}|\sigma(x,\xi)-1| < \frac{\varepsilon}{2M}\quad\text{for all } x\in V.
\end{equation*}
Using the fact that 
$$\int_K|\mathcal{F}_U(A)(\xi)|^2\,d\xi\le\|\mathcal{F}_U(A)\|_{L^2}^2\le M^2,$$  we obtain, for every
$A\in F$ and every $x\in V$,
\begin{align*}
\int_K|\sigma(x,\xi)-1|^2\,|\mathcal{F}_U(A)(\xi)|^2\,d\xi & \le \Big(\frac{\varepsilon}{2M}\Big)^2
\int_K|\mathcal{F}_U(A)(\xi)|^2\,d\xi \\
&\le \Big(\frac{\varepsilon}{2M}\Big)^2M^2
 = \frac{\varepsilon^2}{4}
< \frac{\varepsilon^2}{2}.
\end{align*}

Finally,  for every $A\in F$ and every
$x\in V$, we have
\begin{equation*}
\|\alpha_x(A)-A\|_{\mathcal{T}^2}^2 < \frac{\varepsilon^2}{2}+\frac{\varepsilon^2}{2}
= \varepsilon^2,
\end{equation*}
i.e.\ $\|\alpha_x(A)-A\|_{\mathcal{T}^2}<\varepsilon$. Thus, $F$ is uniformly $\mathcal{T}^2$-equicontinuous.
\end{proof}

\begin{corollary}
Let $F\subset\mathcal{T}^2(\mathcal{H})$ be bounded. Then $F$ is
uniformly $\mathcal{T}^2$-equicontinuous if and only if $\mathcal{F}_U(F)$ has uniform
$L^2(\widehat{\Xi})$-decay.
\end{corollary}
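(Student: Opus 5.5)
The corollary is obtained by putting the two preceding lemmas together, in the special case $p=q=2$. No new estimate is required; we only need to check that the hypotheses of each lemma hold in this setting.

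\emph{Forward direction.} Suppose $F$ is bounded and uniformly $\mathcal{T}^2$-equicontinuous. We apply Lemma \ref{lem:transfer} with $p=2$. Since the conjugate exponent of $2$ is $q=2$, the lemma gives that $\mathcal{F}_U(F)\subset L^2(\widehat{\Xi})$ has uniform $L^2(\widehat{\Xi})$-decay. The ingredients used there remain valid for $p=2$: \textbf{(HY)} at $p=2$ is just the isometric Plancherel bound \textbf{(PL)}, and \textbf{(Conv)} was stated only for trace-class operators. We therefore make one remark. For $A\in\mathcal{T}^2(\mathcal{H})$, the identity $\mathcal{F}_U(\phi*A)=\mathcal{F}_\sigma(\phi)\mathcal{F}_U(A)$ extends from $\mathcal{T}^1(\mathcal{H})$ by density. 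Both sides are continuous in $A$ for the $\mathcal{T}^2$ norm: the left side because $\|\phi*A\|_{\mathcal{T}^2}\le\|\phi\|_{L^1}\|A\|_{\mathcal{T}^2}$ and \textbf{(PL)} holds, the right side because $\mathcal{F}_\sigma(\phi)$ is bounded. By (F4), $\mathcal{T}^1(\mathcal{H})$ is dense in $\mathcal{T}^2(\mathcal{H})$, so the identity holds on all of $\mathcal{T}^2(\mathcal{H})$.

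\emph{Reverse direction.} Suppose $F$ is bounded and $\mathcal{F}_U(F)$ has uniform $L^2(\widehat{\Xi})$-decay. This is exactly the hypothesis of Lemma \ref{lem:converse}, which yields that $F$ is uniformly $\mathcal{T}^2$-equicontinuous. As in the forward direction, the modulation identity \textbf{(Mod)} used in that proof extends from $\mathcal{T}^1(\mathcal{H})$ to $\mathcal{T}^2(\mathcal{H})$ by density and \textbf{(PL)}. Here continuity of the left side uses that $\alpha_x$ is a $\mathcal{T}^2$-isometry, by (F1).

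The only potential obstacle is this extension of \textbf{(Conv)} and \textbf{(Mod)} from trace-class to Hilbert--Schmidt operators. It is settled by the continuity and density argument above, so the corollary follows immediately from the two lemmas.
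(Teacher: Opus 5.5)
Your proposal is correct and matches the paper's proof, which simply combines Lemma \ref{lem:transfer} (with $p=q=2$) and Lemma \ref{lem:converse}. Your density argument extending \textbf{(Conv)} and \textbf{(Mod)} from $\mathcal{T}^1(\mathcal{H})$ to $\mathcal{T}^2(\mathcal{H})$ is valid, and it usefully justifies a step that the two lemmas use without comment.
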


\begin{proof}
Combine Lemma \ref{lem:transfer} (with $p=q=2$) and  Lemma \ref{lem:converse}.
\end{proof}

We can now state and prove the main result which charaterizes precompactness in the Hilbert-Schmidt class. 

\begin{theorem}
\label{thm:main}
Let $F\subset\mathcal{T}^2(\mathcal{H})$ be bounded. The following
assertions are equivalent.
\begin{enumerate}
\item[(i)] $F$ is precompact.
\item[(ii)] $F$ is uniformly $\mathcal{T}^2$-equicontinuous \emph{and} $\mathcal{F}_U(F)$ is
uniformly $L^2(\widehat{\Xi})$-equicontinuous.
\item[(iii)] $\mathcal{F}_U(F)$ has uniform $L^2(\widehat{\Xi})$-decay \emph{and} $\mathcal{F}_U(F)$ is
uniformly $L^2(\widehat{\Xi})$-equicontinuous.
\end{enumerate}
\end{theorem}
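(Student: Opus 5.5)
The plan is to reduce everything to G\'orka's theorem (Theorem~\ref{thm:gorka-pego}) on the LCA group $\widehat{\Xi}$ via the unitary $\mathcal{F}_U:\mathcal{T}^2(\mathcal{H})\to L^2(\widehat{\Xi})$ from \textbf{(PL)}. Since $\mathcal{F}_U$ is a surjective isometry, $F$ is precompact in $\mathcal{T}^2(\mathcal{H})$ if and only if $\mathcal{F}_U(F)$ is precompact in $L^2(\widehat{\Xi})$. Total boundedness is preserved by isometries, and both spaces are complete, so Remark~\ref{rem:totally-bounded} applies. Moreover, $\mathcal{F}_U(F)$ is bounded with the same bound $M$.

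First, (ii)$\Leftrightarrow$(iii) is immediate from the Corollary preceding the theorem. For bounded $F$, uniform $\mathcal{T}^2$-equicontinuity of $F$ is equivalent to uniform $L^2(\widehat{\Xi})$-decay of $\mathcal{F}_U(F)$, and the second condition is literally the same in (ii) and (iii).

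For (i)$\Rightarrow$(iii), assume $F$ is precompact. Then $\mathcal{F}_U(F)$ is precompact in $L^2(\widehat{\Xi})$. By the necessity direction of Weil's theorem (Theorem~\ref{thm:weil-compactness}) applied to $G=\widehat{\Xi}$ and $p=2$, the set $\mathcal{F}_U(F)$ is uniformly $L^2$-equicontinuous under translations $L_\eta$ and has uniform $L^2$-decay: for each $\varepsilon$ there is a compact $K$ with $\|g-g1_K\|_{L^2}<\varepsilon$, which is the decay condition of Definition~\ref{def:funceq} with $\varepsilon^2$ in place of $\varepsilon$. Alternatively, equicontinuity of $F$ itself already follows from Lemma~\ref{lem:precompacteq}, and decay then follows from Lemma~\ref{lem:transfer}.

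For (iii)$\Rightarrow$(i), the bounded set $\mathcal{F}_U(F)\subset L^2(\widehat{\Xi})$ satisfies uniform decay and uniform translation equicontinuity on $\widehat{\Xi}$. These are exactly conditions (i)--(iii) of Weil's theorem on the locally compact group $\widehat{\Xi}$. Hence $\mathcal{F}_U(F)$ is relatively compact, and pulling back by the isometry $\mathcal{F}_U^{-1}$ shows that $F$ is precompact.

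The main point needing care, though not a deep obstacle, is matching the definitions. Uniform $L^2$-decay in Definition~\ref{def:funceq} is stated as $\int_{\widehat{\Xi}\setminus K}|g|^2<\varepsilon$, which is the same as $\|g-g1_K\|_2<\sqrt{\varepsilon}$. The translation $L_\eta$ on the abelian group $\widehat{\Xi}$ is the same operator as in Weil's theorem. One must also note that $\widehat{\Xi}$ is just $\Xi$ with a rescaled Haar measure, so it is a genuine LCA group and Weil's theorem applies with that Haar measure, the rescaling only multiplying norms by a constant. Alternatively, one could cite G\'orka's theorem directly for $G=\widehat{\Xi}$, using (iii) as the decay-plus-equicontinuity data of $\widehat F$ after identifying $\widehat{\Xi}$'s dual with $\Xi$ via $\sigma$; Weil's theorem avoids that identification and is the route I would take.
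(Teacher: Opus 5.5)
Your proof is correct. Its core is the same as the paper's: transport through the unitary $\mathcal{F}_U$ of \textbf{(PL)}, apply Weil's criterion (Theorem~\ref{thm:weil-compactness}) on $\widehat{\Xi}$ for (iii)$\Rightarrow$(i), and use Lemma~\ref{lem:transfer} to turn $\mathcal{T}^2$-equicontinuity into decay. Only the organization differs.

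The paper runs the cycle (i)$\Rightarrow$(ii)$\Rightarrow$(iii)$\Rightarrow$(i). In (i)$\Rightarrow$(ii) it gets $\mathcal{T}^2$-equicontinuity of $F$ from Lemma~\ref{lem:precompacteq}, and equicontinuity of $\mathcal{F}_U(F)$ from the necessity half of G\'orka's theorem (Weil's would serve equally well). So the paper never needs Lemma~\ref{lem:converse} inside the proof of the theorem.

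You instead note that (i)$\Leftrightarrow$(iii) is exactly Weil's theorem pulled back by $\mathcal{F}_U$. You then attach (ii) through the Corollary, which uses Lemma~\ref{lem:converse} (decay $\Rightarrow$ equicontinuity) in place of Lemma~\ref{lem:precompacteq}. Your arrangement makes the structure more transparent: (iii) is the Weil condition, and (ii) is a reformulation of it. The paper's cycle is slightly more economical.

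One small caution about your closing aside on citing G\'orka's theorem directly: it would not work as phrased. G\'orka's conditions are unmixed: decay of both $g$ and $\widehat{g}$, or equicontinuity of both. Condition (iii) is decay and equicontinuity of the same family. Bridging the two would need an extra classical equicontinuity--decay transfer on $\widehat{\Xi}$. Since you explicitly choose Weil's theorem, this does not affect your argument.
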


\begin{proof}
 $\mathcal{F}_U:\mathcal{T}^2(\mathcal{H})\to L^2(\widehat{\Xi})$ is a unitary isomorphism. Since a unitary map carries precompact sets to precompact
sets and vice versa, we have the equivalence
\begin{equation}\label{eq:equivalence}
F \text{ precompact in } \mathcal{T}^2(\mathcal{H})
\quad\Longleftrightarrow\quad
\mathcal{F}_U(F) \text{ precompact in } L^2(\widehat{\Xi}). 
\end{equation}
 
\text{(i)$\Rightarrow$(ii)}. Assume $F$ is precompact. By Lemma
\ref{lem:precompacteq}, $F$ is uniformly $\mathcal{T}^2$-equicontinuous. Also
$\mathcal{F}_U(F)$ is precompact in $L^2(\widehat{\Xi})$ by \eqref{eq:equivalence}. The  precompactness of $\mathcal{F}_U(F)$
forces $\mathcal{F}_U(F)$ to be uniformly $L^2(\widehat{\Xi})$-equicontinuous (Theorem \ref{thm:gorka-pego} applied to the LCA  group $\widehat{\Xi}$).
 
\bigskip

\text{(ii)$\Rightarrow$(iii)}. Assume (ii). By Lemma
\ref{lem:transfer} (with $p=q=2$), uniform $\mathcal{T}^2$-equicontinuity of $F$
implies that $\mathcal{F}_U(F)$ has uniform $L^2(\widehat{\Xi})$-decay. The second clause of (iii)
is the same as the second clause of (ii).
 
\text{(iii)$\Rightarrow$(i)}. Assume $\mathcal{F}_U(F)$ has uniform $L^2(\widehat{\Xi})$-decay and
is uniformly $L^2(\widehat{\Xi})$-equicontinuous.

Since $F$ is bounded, then by \textbf{(PL)},  $\mathcal{F}_U(F)$ is bounded.  
 Moreover, by hypothesis it has uniform
$L^2(\widehat{\Xi})$-decay and is uniformly $L^2(\widehat{\Xi})$-equicontinuous. By the Weil compactness criterion (Theorem\ref{thm:weil-compactness}),  $\mathcal{F}_U(F)$ is precompact in
$L^2(\widehat{\Xi})$. Therefore,  $\mathcal{F}_U(F)$ is totally bounded. 
 Fix $\varepsilon>0$. There exist  $g_1,\dots,g_n\in L^2(\widehat{\Xi})$ such that
$$\mathcal{F}_U(F) \subset \bigcup_{i=1}^n\{h:\|h-g_i\|_{L^2}<\varepsilon\}.$$
 Set
$A_i=\mathcal{F}_U^{-1}(g_i)\in\mathcal{T}^2(\mathcal{H})$. For $A\in F$, pick $i$ such that $\|\mathcal{F}_U(A)-g_i\|_{L^2}
<\varepsilon$. Then, 
\begin{align*}
\|A-A_i\|_{\mathcal{T}^2} &= \|\mathcal{F}_U(A-A_i)\|_{L^2}\\
& = \|\mathcal{F}_U(A)-\mathcal{F}_U(A_i)\|_{L^2} \\
&=
\|\mathcal{F}_U(A)-g_i\|_{L^2} < \varepsilon.
\end{align*}
Thus  $F\subset \union\limits_{i=1}^n\{ A: \|A-A_i\|<\varepsilon\}$. 
That is, $F$ is totally bounded, hence  precompact.
\end{proof}

\section{Applications}
\label{sec:applications}
 
Throughout this section $\Xi$ is a genuine LCA group, so the full translation/modulation duality  is available.  
 
\subsection{Quantum information: precompactness of thermal state families}
 
Take $\Xi=\mathbb R^{2n}$, $U$ the Weyl displacement-operator representation on
$\mathcal{H}=L^2(\mathbb{R}^n)$, so that $\mathcal{F}_U(A)(z)=\text{tr}(AU_z^*)=:\chi_A(z)$ is the quantum characteristic function of $A$ used throughout
continuous-variable quantum information \cite{Weedbrook2012}. For the $n$-mode
isotropic thermal state $\rho_{\bar{n}}$ with mean photon number $\bar n\ge0$, we know that
\begin{equation*}
\chi_{\rho_{\bar{n}}}(z) = e^{-(\bar n+\frac12)\|z\|^2}, \quad z\in\mathbb{R}^{2n}.
\end{equation*}

 Let $\bar n_{\min}$ and $\bar n_{\max}$ denote respectively the minimum and the maximum values of $\bar{n}$. Assume $\bar n_{\max}<\infty$ (an infinite $\bar{n}$ would require infinite energy, which is physically impossible because any real power source is strictly finite and excessive energy density  destroys the physical system \cite{Roch2025}). Consider the family $F=\{\rho_{\bar n}:\bar n\in
[\bar n_{\min},\bar n_{\max}]\}$. 
\begin{proposition}
\label{prop:thermal}
 The family $F$ is precompact in $\big(\mathcal{T}^2(L^2(\mathbb
R^n)),\|\cdot\|_{\mathcal{T}^2}\big)$.
\end{proposition}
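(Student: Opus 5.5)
We apply Theorem \ref{thm:main} through condition (iii): we show that $\mathcal{F}_U(F)=\{\chi_{\rho_{\bar n}}:\bar n\in[\bar n_{\min},\bar n_{\max}]\}$ is bounded, has uniform $L^2(\widehat{\Xi})$-decay, and is uniformly $L^2(\widehat{\Xi})$-equicontinuous. Throughout we write $a=\bar n+\frac12\in[a_0,a_1]$, where $a_0=\bar n_{\min}+\frac12\ge\frac12$ and $a_1=\bar n_{\max}+\frac12<\infty$. Then $\chi_{\rho_{\bar n}}(z)=g_a(z):=e^{-a\|z\|^2}$.

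\emph{Boundedness.} By \textbf{(PL)}, $\|\rho_{\bar n}\|_{\mathcal{T}^2}=\|g_a\|_{L^2}$. This norm is a constant multiple (fixed by the Haar normalisation on $\widehat{\Xi}$) of $(\pi/2a)^{n/2}\le(\pi/2a_0)^{n/2}$, so $F$ is bounded.

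\emph{Uniform decay.} For every $a\ge a_0$ we have the pointwise bound $|g_a(z)|^2\le e^{-2a_0\|z\|^2}$. The right-hand side is integrable, so for any $\varepsilon>0$ there is a closed ball $K=\{\|z\|\le R\}$ with $\int_{\widehat{\Xi}\setminus K}e^{-2a_0\|z\|^2}\,dz<\varepsilon$. This single $K$ works for the whole family.

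\emph{Uniform equicontinuity.} This is the main step, because the choice of neighbourhood must be independent of $a$. We avoid a direct estimate by using compactness of the parameter interval. The map $[a_0,a_1]\ni a\mapsto g_a\in L^2(\mathbb R^{2n})$ is continuous: pointwise $g_a\to g_{a'}$ as $a\to a'$, and $|g_a|\le e^{-a_0\|z\|^2}$ gives an $L^2$ dominating function, so dominated convergence applies. Hence $\mathcal{F}_U(F)$ is the continuous image of a compact interval, so it is compact in $L^2(\widehat{\Xi})$. Compactness in $L^2(\widehat{\Xi})$ yields uniform equicontinuity by the necessity direction of Theorem \ref{thm:gorka-pego} (equivalently Theorem \ref{thm:weil-compactness}), applied to the LCA group $\widehat{\Xi}\cong\mathbb R^{2n}$.

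If a hands-on estimate is preferred, one can instead use $|g_a(z-\eta)-g_a(z)|\le 2a\|\eta\|(\|z\|+\|\eta\|)e^{-a_0(\|z\|-\|\eta\|)_+^2}$, which follows from the mean value theorem. Squaring and integrating gives $\|L_\eta g_a-g_a\|_{L^2}\le C(a_0,a_1,n)\|\eta\|$ for $\|\eta\|\le1$, uniformly in $a$.

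\emph{Conclusion.} With the three properties established, Theorem \ref{thm:main} (iii)$\Rightarrow$(i) gives the precompactness of $F$ in $\mathcal{T}^2(L^2(\mathbb R^n))$.

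The continuity argument in the equicontinuity step in fact already shows that $\mathcal{F}_U(F)$ is compact, and since $\mathcal{F}_U$ is unitary by \textbf{(PL)}, this alone gives compactness of $F$. We use the route through Theorem \ref{thm:main} only because it illustrates the criterion.
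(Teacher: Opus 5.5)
Your proof is correct, but it takes a different route from the paper. The paper verifies condition (ii) of Theorem \ref{thm:main}:
\begin{itemize}
\item For uniform $\mathcal{T}^2$-equicontinuity of $F$ itself, it combines (PL) with the modulation identity (Mod). This reduces $\|\alpha_\eta(\rho_{\bar n})-\rho_{\bar n}\|_{\mathcal{T}^2}^2$ to $h(\eta,\bar n)=\int|\sigma(\eta,\xi)-1|^2e^{-a\|\xi\|^2}\,d\xi$.
\item For transform-side equicontinuity, it uses the analogous integral $k(\eta,\bar n)$.
\item In both cases, dominated convergence gives joint continuity on the compact set $\{\|\eta\|\le 1\}\times[\bar n_{\min},\bar n_{\max}]$. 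Uniform continuity there then gives $\sup_{\bar n}h(\eta,\bar n)\to0$ and $\sup_{\bar n}k(\eta,\bar n)\to0$.
\end{itemize}
You instead verify (iii). Your decay is immediate from the single Gaussian majorant $e^{-2a_0\|z\|^2}$, so (Mod) is never needed. Your equicontinuity comes from continuity of $a\mapsto g_a$ into $L^2$ together with the necessity half of Theorem \ref{thm:gorka-pego}.

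Your route makes transparent, as you yourself note, that $F$ is the continuous image of the compact interval $[\bar n_{\min},\bar n_{\max}]$ under $\bar n\mapsto\rho_{\bar n}=\mathcal{F}_U^{-1}(g_a)$. Hence $F$ is in fact compact, not merely precompact, and Theorem \ref{thm:main} is logically superfluous for this proposition. The paper's computation instead serves to exercise both equicontinuity conditions of its criterion by hand, including the operator-side shift condition. Your alternative mean-value bound also yields an explicit modulus $\|L_\eta g_a-g_a\|_{L^2}\le C(a_0,a_1,n)\|\eta\|$ for $\|\eta\|\le1$. This is quantitatively stronger than the qualitative convergence the paper obtains.
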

 
\begin{proof}
Set $a=2\bar n+1\in [a_{\min}, a_{\max}]$ with $a_{\min}=2\bar n_{\min}+1$ and $a_{\max}=2\bar n_{\max}+1$.
 \begin{itemize} 
\item \emph{Boundedness.} 
We have 
\begin{align*}
\|\chi_{\rho_{\bar n}}\|_{L^2(\mathbb R^{2n})}^2&=\int_{R^{2n}}
e^{-a\|z\|^2}dz\\
&=\left(\frac{\pi}{a}\right)^n\\
&\le \left(\frac{\pi}{a_{\min}}\right)^n.
\end{align*}
Therefore, by the Plancherel theorem \textbf{(PL)},
$$\|\rho_{\bar n}\|_{\mathcal{T}^2}\le \left(\frac{\pi}{a_{\min}}\right)^{\frac{n}{2}}$$ for all  $\bar
n\in[\bar n_{\min},\bar n_{\max}]$.
 
 \item \emph{Uniform $\mathcal{T}^2$-equicontinuity of ${F}$.} For $\eta\in\mathbb{R}^{2n}$, we have
\begin{align*}
\|\alpha_\eta(\rho_{\bar n})-\rho_{\bar n}\|_{\mathcal{T}^2}^2 &=
\|\mathcal{F}_U(\alpha_\eta(\rho_{\bar n}))-\chi_{\rho_{\bar n}}\|_{L^2}^2 \\
&(\text{by the Plancherel theorem})\\
&=\|\sigma(\eta, \xi)\mathcal{F}_U(\rho_{\bar n})-\chi_{\rho_{\bar n}}\|_{L^2}^2 \\
&=\int_{\mathbb R^{2n}}|\sigma(\eta,\xi)-1|^2\,e^{-a\|\xi\|^2}d\xi.
\end{align*}
Set
$$ h(\eta,\bar n)=\int_{\mathbb R^{2n}}|\sigma(\eta,\xi)-1|^2\,e^{-a\|\xi\|^2}\,d\xi.$$ 
The integrand is continuous because its  factors are individually continuous. 
Since $|\sigma(\eta,\xi)|=1$, $|\sigma(\eta,\xi)-1|\le 2$. So,
the integrand is dominated by
$4e^{-a_{\min}\|\xi\|^2}$. The latter is  integrable (as a Gaussian) and independent of $(\eta,\bar{n})$.  By the Lebesgue dominated convergence theorem, $h$ is jointly continuous on
$\{\eta: \|\eta\|\le1\}\times[\bar n_{\min},\bar n_{\max}]$, with $h(0,\bar n)=0$ for every
$\bar n$. Joint continuity on  compact set implies uniform
continuity. Therefore,   $\sup\limits_{\bar n}h(\eta,\bar n)$ tends to $0$ as $\eta$ goes $0$.  Thus, $F$ is  uniformly
$\mathcal{T}^2$-equicontinuous.
 
\item \emph{Uniform $L^2(\widehat{\Xi})$-equicontinuity of $\mathcal{F}_U(F)$.} Similarly,
\begin{equation*}
\|\chi_{\rho_{\bar n}}(\cdot+\eta)-\chi_{\rho_{\bar n}}\|_{L^2}^2 =
\int_{\mathbb R^{2n}}\big|e^{-\frac a2\|\xi+\eta\|^2}-e^{-\frac a2\|\xi\|^2}
\big|^2\,d\xi. 
\end{equation*}
Set $$k(\eta,\bar n)=\int_{\mathbb R^{2n}}\big|e^{-\frac a2\|\xi+\eta\|^2}-e^{-\frac a2\|\xi\|^2}
\big|^2\,d\xi.$$
We use the classical inequality $$|u - v|^2 \le 2|u|^2 + 2|v|^2$$ togheter with the constraint $\|\eta\| \le 1$ and the fact $a \ge a_{\min} > 0$, to obtain that the integrand can be bounded uniformly as follows:
\begin{equation*}
\big|e^{-\frac a2\|\xi+\eta\|^2}-e^{-\frac a2\|\xi\|^2}\big|^2 \le 2e^{-a_{\min}\|\xi+\eta\|^2} + 2e^{-a_{\min}\|\xi\|^2}
\end{equation*}
Applying the triangle inequality, we obtain  $\|\xi+\eta\| \ge \|\xi\| - 1$ and this  conducts to
\begin{equation*}
\big|e^{-\frac a2\|\xi+\eta\|^2}-e^{-\frac a2\|\xi\|^2}\big|^2 \le  2e^{a_{\min}} e^{-\frac{1}{2}a_{\min}\|\xi\|^2} + 2e^{-a_{\min}\|\xi\|^2}. 
\end{equation*}
Since the right hand side is independent of $(\eta,\bar n)$ and integrable,  
 the Lebesgue Dominated Convergence Theorem applied. Moreover,  
\begin{equation*}
k(0, \bar{n}) = \int_{\mathbb R^{2n}}\big|e^{-\frac a2\|\xi\|^2}-e^{-\frac a2\|\xi\|^2}\big|^2\,d\xi = 0.
\end{equation*}
Thus, similarly to the above paragraph, $\sup\limits_{\bar n}k(\eta,\bar n)$ tends to $0$ as $\eta$ goes $0$.
Hence, $\mathcal{F}_U(F)$  is uniformly $L^2(\widehat{\Xi})$-equicontinuous.

By Theorem \ref{thm:main}, $F$ is
precompact.
\end{itemize}
\end{proof}

\subsection{Quantum statistics: tightness of tomographic state estimators}
 
In quantum tomography, an unknown state $\rho$ is estimated from $n$ i.i.d.\
measurement outcomes $Z_1,\dots,Z_n\in \widehat{\Xi}$ whose distribution yields an
unbiased pointwise estimator of the characteristic function
$$\widehat\chi_n(\xi)=\displaystyle\frac{1}{n}\sum_{j=1}^n\overline{\sigma(Z_j,\xi)}.$$ 
This is the direct
quantum analogue of the empirical characteristic function used in classical
deconvolution density estimation \cite{ArtilesGillGuta2005}. One estimates $\rho$ by taking the 
Fourier inverse of  a truncated version of $\widehat\chi_n$:
\begin{equation*}
\widehat\rho_n = \mathcal{F}_U^{-1}\big(\widehat\chi_n\cdot1_{B_T}\big) =
\int_{\|\xi\|\le T}\widehat\chi_n(\xi)\,U_\xi\,d\xi, 
\end{equation*}
where $B_T=\{\xi \in \widehat{\Xi}:\|\xi\|\le T\}$ and  $1_{B_T}$ is the characteristic function of the set $B_T$.

\begin{proposition}
\label{prop:tightness}
Fix $T^*<\infty$ and suppose $\{\widehat\chi_n\cdot1_{B_{T^*}}\}_n$ is (almost surely or in
probability) uniformly $L^2$-equicontinuous on the fixed compact set $B_{T^*}$. Then
$\{\widehat\rho_n\}_n$, with $\widehat\rho_n=\mathcal{F}_U^{-1}(\mathbf1_{B_{T^*}}
\widehat\chi_n)$, is  precompact in
$\mathcal{T}^2(\mathcal{H})$.
\end{proposition}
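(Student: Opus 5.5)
The plan is to apply Theorem \ref{thm:main} in its form (iii)$\Rightarrow$(i), working entirely on the Fourier side. Set $g_n=\mathbf 1_{B_{T^*}}\widehat\chi_n$. By \textbf{(PL)}, $\mathcal F_U(\widehat\rho_n)=g_n$. It therefore suffices to check three things: the family $\{\widehat\rho_n\}_n$ is bounded in $\mathcal T^2(\mathcal H)$, the family $\{g_n\}_n$ has uniform $L^2(\widehat\Xi)$-decay, and $\{g_n\}_n$ is uniformly $L^2(\widehat\Xi)$-equicontinuous. Everything is read pathwise: on the almost sure event where the equicontinuity hypothesis holds, or along realizations in the event of high probability in the "in probability" version.

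\emph{Boundedness.} Each $\widehat\chi_n$ is an average of unimodular numbers $\overline{\sigma(Z_j,\xi)}$, so $|\widehat\chi_n(\xi)|\le 1$ for every $\xi$. Hence
$$\|g_n\|_{L^2}^2\le |B_{T^*}|,$$
the Haar measure of the ball $B_{T^*}$. This is finite because $B_{T^*}$ is compact; in the concrete case $\widehat\Xi=\mathbb R^{2n}$ it is a closed Euclidean ball. By \textbf{(PL)}, $\|\widehat\rho_n\|_{\mathcal T^2}\le |B_{T^*}|^{1/2}$ uniformly in $n$.

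\emph{Uniform decay.} Take $K=B_{T^*}$. Then $\int_{\widehat\Xi\setminus K}|g_n|^2=0$ for all $n$, so the decay condition holds trivially for every $\varepsilon$.

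\emph{Equicontinuity.} This is exactly the standing hypothesis on $\{\widehat\chi_n\mathbf 1_{B_{T^*}}\}_n$. I will read "uniformly $L^2$-equicontinuous on $B_{T^*}$" as equicontinuity of the functions $g_n$ themselves, viewed in $L^2(\widehat\Xi)$, which is the object appearing in Definition \ref{def:funceq}(i). This interpretive step is the only delicate point. Truncation by $\mathbf 1_{B_{T^*}}$ creates a jump at the boundary, so equicontinuity of $\widehat\chi_n$ restricted to the ball would not by itself control $\|L_\eta g_n-g_n\|_{L^2}$. If only the restricted version were assumed, I would split
$$L_\eta g_n-g_n=\mathbf 1_{B_{T^*}}(L_\eta\widehat\chi_n-\widehat\chi_n)+(L_\eta\mathbf 1_{B_{T^*}}-\mathbf 1_{B_{T^*}})L_\eta\widehat\chi_n.$$
The first term is handled by the hypothesis. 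For the second, the bound $|\widehat\chi_n|\le1$ gives a contribution of at most $\|L_\eta\mathbf 1_{B_{T^*}}-\mathbf 1_{B_{T^*}}\|_{L^2}$, which tends to $0$ by continuity of translation in $L^2$ (the case of Lemma \ref{lem:shiftcont}, or Theorem \ref{thm:weil-compactness}, for a single function). This bound is independent of $n$.

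With all three conditions verified, Theorem \ref{thm:main} (iii)$\Rightarrow$(i) gives precompactness of $\{\widehat\rho_n\}_n$ in $\mathcal T^2(\mathcal H)$, almost surely or on the corresponding events in the probabilistic sense.
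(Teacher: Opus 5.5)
Your proposal is correct and follows the paper's proof step for step: boundedness from $|\widehat\chi_n|\le 1$ and \textbf{(PL)}, trivial decay with $K=B_{T^*}$, equicontinuity from the hypothesis, and then Theorem~\ref{thm:main}, (iii)$\Rightarrow$(i). Your splitting of $L_\eta g_n-g_n$ to control the jump of $\mathbf 1_{B_{T^*}}$ at the boundary is an improvement over the paper, which only asserts that vanishing off $B_{T^*}$ upgrades equicontinuity on $B_{T^*}$ to equicontinuity on all of $\widehat\Xi$. One small correction: for the continuity of $\eta\mapsto L_\eta\mathbf 1_{B_{T^*}}$ in $L^2$, cite Theorem~\ref{thm:weil-compactness} applied to a singleton rather than Lemma~\ref{lem:shiftcont}, since that lemma concerns the operator shifts $\alpha_x$ (modulations on the Fourier side), not translations on $\widehat\Xi$.
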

 
\begin{proof}
\begin{itemize}
\item \emph{Boundedness.}
Since $|\sigma(Z_j,\xi)|=1$,  $|\widehat\chi_n(\xi)|\le 1$. Therefore,  
  $$\|1_{B_{T^*}}\widehat\chi_n\|_{L^2}\le
|B_{T^*}|^{1/2},$$ 
where $|B_{T^*}|$ is the volume of $B_{T^*}$.
By the Plancherel theorem (PL), 
$$\|\widehat\rho_n\|_{\mathcal{T}^2}\le|B_{T^*}|^{1/2}\quad \text{ for all } n.$$ 
 
\item \emph{Uniform $L^2(\widehat{\Xi})$-decay.} $\mathcal{F}_U(\widehat\rho_n)=1_{B_{T^*}}\widehat\chi_n$
vanishes identically outside the fixed compact $B_{T^*}$, so uniform
$L^2$-decay of $\{\mathcal{F}_U(\widehat\rho_n)\}_n$ holds trivially with the same compact set.
 
\item \emph{Uniform $L^2(\widehat{\Xi})$-equicontinuity.} On $B_{T^*}$ this is the hypothesis; off $B_{T^*}$,
$\mathcal{F}_U(\widehat\rho_n)\equiv0$, so uniform $L^2(\widehat{\Xi})$-equicontinuity
of $\{\mathcal{F}_U(\widehat\rho_n)\}_n$ holds on all  $\widehat{\Xi}$.
 \end{itemize}
By Theorem \ref{thm:main}, $\{\widehat\rho_n\}$ is precompact.
\end{proof}

\section{Conclusion}

In this paper we established an operator-theoretic analogue of Pego's compactness
theorem within Fulsche-Galke's framework of quantum harmonic analysis on general
locally compact abelian phase spaces. 
We illustrated the applicability of the abstract criterion in two settings drawn
from quantum information and quantum statistics.  

\noindent Our criterion characterizes precompactness almost entirely through the transform side.  It would be of interest to
find an intrinsic notion of decay at infinity directly for an operator, not mediated by the  Fourier-Weyl transform, that could serve as an operator-side analogue of the classical decay condition. For this purpose the Halvdansson's extension of quantum harmonic analysis beyond the
locally compact abelian setting \cite{Halvdansson2025} can serve as a guide.

\end{document}